\pdfoutput=1 

\documentclass{amsart}

\usepackage{amsmath}%
\usepackage{amsfonts}%
\usepackage{amssymb}%
\usepackage{graphicx}
\usepackage{bm}
\usepackage{MnSymbol}
\usepackage[all,cmtip]{xy}
\usepackage{cite}
\usepackage{hyperref}
\usepackage{tensor}
\usepackage{tikz}
\usepackage{rotating}
\usepackage{float}
\usepackage{tqft}

%

\newtheorem{dummy}{dummy}[section]              
\newtheorem{lemma}[dummy]{Lemma}
\newtheorem{theorem}[dummy]{Theorem}

\newtheorem{proposition}[dummy]{Proposition}
\theoremstyle{definition}                                  
\newtheorem{definition}[dummy]{Definition}
\newtheorem{example}[dummy]{Example}
\newtheorem{remark}[dummy]{Remark}
\newtheorem{notation}[dummy]{Notation}


\newcommand{\Bord}{\mathbf{Bord}}
\newcommand{\Cob}{\mathbf{Cob}}
\newcommand{\Alg}{\mathbf{Alg}}
\newcommand{\Vect}{\mathbf{Vect}}
\newcommand{\Fam }{\mathbf{Fam}}
\newcommand{\Aut }{\mathrm{Aut}}

\newcommand{\Cov}{\mathcal Cov}
\newcommand{\Hom}{\mathrm{Hom}}
\newcommand{\HOM}{\mathrm{HOM}}
\newcommand{\End}{\mathrm{End}}
\newcommand{\CUP}{\mathbf{Cup}}
\newcommand{\CAP}{\mathbf{Cap}}
\newcommand{\PANTS}{\mathbf{Pants}}
\newcommand{\SADDLE}{\mathbf{Saddle}}

\newcommand{\CHAPS}{\mathbf{Chaps}}
\newcommand{\WHISTLE}{\mathbf{Whistle}}

\newcommand{\too}{\longrightarrow}

\newcommand{\Sum}{\mathrm{Sum}}

\newcommand{\colim }{\mathrm{colim}}
\newcommand{\ZZ}{\mathbb Z}

\newcommand{\SP}{\mathrm{SP}}
\newcommand{\OP}{\mathrm{OP}}

\newcommand{\Ab}{\mathcal A b}

\newcommand{\KO}{ KO}

\newcommand{\Cl}{\mathcal C \! \ell}
\newcommand{\bmu}{{\bm{\mu}}}
\newcommand{\bnu}{{\bm{\nu}}}

\newcommand{\END}{\mathrm{End}}
\newcommand{\PP}{\mathrm{P}}

\newcommand{\lrsub}[3]{\tensor[_{#1}]{{#2}}{_{#3}}}

\newcommand{\cC}{\mathcal C}
\newcommand{\cD}{\mathcal D}

\newcommand{\cH}{\mathcal H}

\newcommand{\cS}{\mathcal S}
\newcommand{\cT}{\mathcal T}

\newcommand{\cY}{\mathcal Y}
\newcommand{\cZ}{\mathcal Z}

\newcommand{\C}{\mathbb C}
\newcommand{\F}{\mathbb F}

\newcommand{\R}{\mathbb R}
\newcommand{\Z}{\mathbb Z}

\newcommand{\sidewayssimeq}{
     \begin{sideways}$\simeq$\end{sideways}}

\newcommand{\tqftid}{

\draw (0,0) -- (2,0);
\draw[fill] (0,0) circle [radius=0.05];
\draw node [above] at (0,0) {$+$};
\draw[fill] (2,0) circle [radius=0.05];
\draw node [above] at (2,0) {$+$};
} 

\newcommand{\tqftidminus}{

\draw (0,0) -- (2,0);
\draw[fill] (0,0) circle [radius=0.05];
\draw node [above] at (0,0) {$-$};
\draw[fill] (2,0) circle [radius=0.05];
\draw node [above] at (2,0) {$-$};
} 

\newcommand{\tqftidpm}{
\draw (0,0) -- (2,0);
\draw[fill] (0,0) circle [radius=0.05];
\draw node [above] at (0,0) {$\pm$};
\draw[fill] (2,0) circle [radius=0.05];
\draw node [above] at (2,0) {$\pm$};
}

\newcommand{\tqftev}{

\draw (0,0) arc [radius=1, start angle=90, end angle=-90];
\draw[fill] (0,0) circle [radius=0.05];
\draw node [above] at (0,0) {$-$};
\draw node [above] at (0,-2) {$+$};
\draw[fill] (0,-2) circle [radius=0.05];
}

\newcommand{\tqftcoev}{

\draw (0,0) arc [radius=1, start angle=-270, end angle=-90];
\draw[fill] (0,0) circle [radius=0.05];
\draw node [above] at (0,0) {$+$};
\draw node [above] at (0,-2) {$-$};
\draw[fill] (0,-2) circle [radius=0.05];
}

\newcommand{\tqfts}{

\draw (0,0) to [out=0, in =180] (2,-2);
\draw (0,-2) to [out=0, in =180] (2,0);
\draw[fill] (0,0) circle [radius=0.05];
\draw node [above] at (0,0) {$+$};
\draw[fill] (2,0) circle [radius=0.05];
\draw node [above] at (2,0) {$-$};
\draw[fill] (0,-2) circle [radius=0.05];
\draw node [above] at (0,-2) {$-$};
\draw[fill] (2,-2) circle [radius=0.05];
\draw node [above] at (2,-2) {$+$};
}

\newcommand{\tqftalphapm}{
\draw (0,0) to [out=0,in=180] (0.9,0) to [out=0, in=0] (1,.3) to [out=180, in=180] (1.1,0) to [out=0,in=180] (2,0);
\draw[fill] (0,0) circle [radius=0.05];
\draw node [above] at (0,0) {$\pm$};
\draw[fill] (2,0) circle [radius=0.05];
\draw node [above] at (2,0) {$\pm$};
}

\newcommand{\tqftevbar}{
\begin{scope}[yshift=-2cm]
\draw (0,0) to [out=0, in=-90] (.7, .8) to [out=-270, in=-270] (1,1) to [out=-90, in=-90] (.7, 1.2) to [out=-270,in=0] (0,2);
\draw[fill] (0,0) circle [radius=0.05];
\draw node [above] at (0,0) {$+$};
\draw node [above] at (0,2) {$-$};
\draw[fill] (0,2) circle [radius=0.05];
\end{scope}
}

\newcommand{\tqftcoevbar}{
\draw (0,0) to [out=180, in=90] (-.7,- .8) to [out=270, in=270] (-1,-1) to [out=90, in=90] (-.7, -1.2) to [out=270,in=180] (0,-2);
\draw[fill] (0,0) circle [radius=0.05];
\draw node [above] at (0,0) {$+$};
\draw node [above] at (0,-2) {$-$};
\draw[fill] (0,-2) circle [radius=0.05];
}

\newcommand{\tqftsper}{
\draw (0.5,0) to [out=0, in=135] (1,-1) to [out=315, in=180] (1.5,-2) to [out=0, in=270] (2,-1) to [out=90, in=0] (1.5,0) to [out=180, in=45] (1,-1) to [out=225,in=0] (0.5,-2) to [out=180, in=270] (0,-1) to [out=90,in=180] (0.5,0);
}

\newcommand{\tqftdisc}{
\draw (1.618,.6266) to [out=90, in=0] (.5,2) to [out=180,in=90] (-0.618,.2394);

\draw [dashed, domain=20:200, smooth, variable=\t]
plot ({cos(\t) + 0.5*sin(\t)+.5},{0.433* sin(\t) +0.433});
\draw [domain=200:380, smooth, variable=\t]
plot ({cos(\t) + 0.5*sin(\t)+.5},{0.433* sin(\t) +0.433});

\begin{scope}[xshift=.5cm,yshift=.433cm]
\draw[fill] (0.5,.433) circle [radius=0.05];
\draw node [above left] at (0.5,0.433) {$-$};
\draw[fill] (-.5, -.433) circle [radius=0.05];
\draw node [above left] at (-.5,-.433) {$+$};
\end{scope}

}

\newcommand{\tqftsaddle}{
\begin{scope}[yscale=1, cm={1,0,.5,0.433,(0,0)}]
\draw (0,1) arc (90:270:1);
\draw (-2.5,-1) arc (270:450:1);
\end{scope}
\draw (-1.382,.1936) to [out=270,in=180] (-1.25,-.3) to [out=0,in=270] (-1.118,-0.2606);

\draw (.5,.433) -- (0.5,-1.567);
\draw (-.5,-.433) -- (-.5, -2.433);
\draw (-2,.433) -- (-2,-0.2);
\draw [dashed] (-2,-.2) -- (-2,-1.567);
\draw (-3, -.433) -- (-3,-2.433);

\draw [dashed](-2,-1.567) -- (-0.5,-1.567);
\draw (-.5,-1.567) -- (.5, -1.567);
\draw (-3, -2.433) -- (-.5, -2.433);

\draw[fill] (0.5,.433) circle [radius=0.05];
\draw node [above left] at (0.5,0.433) {$-$};
\draw[fill] (-.5, -.433) circle [radius=0.05];
\draw node [above left] at (-.5,-.433) {$+$};
\draw [fill] (-2,.433) circle [radius=.05];
\draw node [above left] at (-2,.433) {$-$};
\draw [fill] (-3,-.433) circle [radius=.05];
\draw node [above left] at (-3,-.433) {$+$};

\draw[fill] (0.5,-1.567) circle [radius=0.05];
\draw node [above left] at (0.5,-1.567) {$-$};
\draw[fill] (-.5, -2.433) circle [radius=0.05];
\draw node [above left] at (-.5,-2.433) {$+$};
\draw [fill] (-2,-1.567) circle [radius=.05];
\draw node [above left] at (-2,-1.567) {$-$};
\draw [fill] (-3,-2.433) circle [radius=.05];
\draw node [above left] at (-3,-2.433) {$+$};

}

\newcommand{\tqftadjunction}{
\begin{scope}[yscale=1, cm={1,0,.5,0.433,(0,0)}]
\draw (-2.5,-1) arc (270:450:1);
\end{scope}

\begin{scope}[yshift=2cm, cm={1,0,.5,0.433,(0,0)}]
\draw (-2.5,-1) arc (270:450:1);
\end{scope}

\draw (-1.382,.1936) to [out=270,in=180] (-1.25,-.3) to [out=0,in=270] (-1.118,-0.2606);

\draw [dashed] (.5,.433) -- (0.5,-1.567);
\draw (-.5,-.433) -- (-.5, -2.433);
\draw [dashed] (-2, 1.8) -- (-2,.433) -- (-2,-1.567);
\draw (-2,2.433) -- (-2,1.8);
\draw (-3,1.567)-- (-3, -.433) -- (-3,-2.433);

\draw  [dashed] (-2,-1.567) -- (0.5,-1.567);
\draw (-3, -2.433) -- (-.5, -2.433);

\draw[fill] (0.5,.433) circle [radius=0.05];
\draw node [above left] at (0.5,0.433) {$-$};
\draw[fill] (-.5, -.433) circle [radius=0.05];
\draw node [above left] at (-.5,-.433) {$+$};
\draw [fill] (-2,.433) circle [radius=.05];
\draw node [above left] at (-2,.433) {$-$};
\draw [fill] (-3,-.433) circle [radius=.05];
\draw node [above left] at (-3,-.433) {$+$};

\draw [fill] (-2,.433+2) circle [radius=.05];
\draw node [above left] at (-2,.433+2) {$-$};
\draw [fill] (-3,-.433+2) circle [radius=.05];
\draw node [above left] at (-3,-.433+2) {$+$};

\draw[fill] (0.5,-1.567) circle [radius=0.05];
\draw node [above left] at (0.5,-1.567) {$-$};
\draw[fill] (-.5, -2.433) circle [radius=0.05];
\draw node [above left] at (-.5,-2.433) {$+$};
\draw [fill] (-2,-1.567) circle [radius=.05];
\draw node [above left] at (-2,-1.567) {$-$};
\draw [fill] (-3,-2.433) circle [radius=.05];
\draw node [above left] at (-3,-2.433) {$+$};

\begin{scope}[xshift=-.5cm,yshift=-.433cm]
\tqftdisc
\draw (1.618,.6266) -- (1.618,.6266 -2);
\draw (1.618-2.5,.6266+2) -- (1.618-2.5,.6266 );
\begin{scope}[yshift=-2cm, cm={1,0,.5,0.433,(0,0)}]
\draw (0,0) arc (270:400:1);
\draw [dashed] (.78,1.618) arc (400:450:1);
\end{scope}
\end{scope}
}
\newcommand{\tqftidev}{
\begin{scope}[cm={1,0,.5,0.433,(0,0)}]
\draw (0,0) arc (270:400:1);
\draw (.78,1.618) arc (400:450:1);
\end{scope}
\begin{scope}[yshift=-2cm, cm={1,0,.5,0.433,(0,0)}]
\draw (0,0) arc (270:400:1);
\draw [dashed] (.78,1.618) arc (400:450:1);
\end{scope}

\draw (1.618,.6266) -- (1.618,.6266 -2);

\begin{scope}[xshift=.5cm,yshift=.433cm]
\draw (-.5,-.433) -- (-.5, -2.433);
\draw (.5,.433) -- (0.5,-.2);
\draw [dashed] (.5,-.2) -- (0.5,-1.567);

\draw[fill] (0.5,.433) circle [radius=0.05];
\draw node [above left] at (0.5,0.433) {$+$};
\draw[fill] (-.5, -.433) circle [radius=0.05];
\draw node [above left] at (-.5,-.433) {$-$};
\draw[fill] (0.5,-1.567) circle [radius=0.05];
\draw node [above left] at (0.5,-1.567) {$+$};
\draw[fill] (-.5, -2.433) circle [radius=0.05];
\draw node [above left] at (-.5,-2.433) {$-$};
\end{scope}
}

\begin{document}
\title{Spin Hurwitz numbers and topological quantum field theory}

\author{Sam Gunningham}

\begin{abstract}

Spin Hurwitz numbers count ramified covers of a spin surface, weighted by the
size of their automorphism group (like ordinary Hurwitz numbers), but signed
$\pm 1$ according to the parity of the covering surface. These numbers were first defined by Eskin-Okounkov-Pandharipande in order to study the moduli of holomorphic differentials on a Riemann surface. They have also been related to Gromov-Witten invariants of of complex 2-folds by work of Lee-Parker and Maulik-Pandharipande. In this paper, we construct a (spin) TQFT which computes these numbers, and
deduce a formula for any genus in terms of the combinatorics of the Sergeev
algebra, generalizing the formula of Eskin-Okounkov-Pandharipande.
During the construction, we describe a procedure for averaging any TQFT over
finite covering spaces based on the finite path integrals of
Freed-Hopkins-Lurie-Teleman. 
\end{abstract}
\maketitle

Hurwitz numbers count ramified covers of surfaces weighted by the size of 
the automorphism group of the cover. Combinatorial formulas in terms of the representation 
theory of the symmetric group go back to Hurwitz \cite{Hur},  Frobenius \cite{Frob}, and 
Burnside \cite{Bur}.

Suppose $\Sigma$ is a closed surface equipped with a spin structure. Associated to $\Sigma$ is an element $p(\Sigma) \in \Z/2\Z$ known as the parity or Atiyah invariant. In this paper, we study a variant of Hurwitz numbers called \emph{spin Hurwitz numbers}, where ramified covers of $\Sigma$ are counted with a sign according to the parity of the total space of the cover (equipped with the pullback spin structure). One of the main results is the following formula for spin Hurwitz numbers as a sum over the set $\SP(n)$ of \emph{strict partitions} of $n$ (i.e. partitions into distinct parts).

\begin{theorem}\label{Theorem1}
The degree $n$ unramified\footnote{See Theorem \ref{Hurwitz} for a more general version involving ramified covers.} spin Hurwitz numbers of a closed spin surface $\Sigma$ are given by:
\[
2^{-n\chi(\Sigma)/2}\sum \limits_{\bnu \in \SP(n)} (-1)^{p(\Sigma) \ell(\bnu)}
\left( \frac{d(\bnu)}{n!}\right)^{\chi(\Sigma)},
\]
where
\[
d(\bnu) = 2^{n-\ell({\bnu})/2} \frac{n!}{\nu _1 ! \ldots \nu _\ell !} \prod _{p<q} \frac{\nu _q - \nu _p}{\nu _p + \nu _q},
\]
and $\ell(\bnu)$ denotes the length of the partition $\bnu$.
\end{theorem}

\begin{remark}
The numbers $d(\bnu)$ have a representation theoretic interpretation: namely they are the dimensions of the simple supermodules of the \emph{Sergeev algebra}, up to a factor of $\sqrt{2}$ if $\ell(\bnu)$ is odd (see Proposition \ref{propositionsergeev2}).
\end{remark}

Spin Hurwitz numbers were first introduced by Eskin-Okounkov-Pandharipande \cite{EOP}, who were interested in the volume of strata in the moduli space of differentials on a Riemann surface with prescribed singularities. Kontsevich-Zorich \cite{kontsevich_connected_2003} and Eskin-Masur \cite{eskin_asymptotic_2001} independently explained how these volumes are related to the asymptotics of the enumeration of connected branched coverings of a torus, where the ramification data of the covering matches the singularities of the differential. Eskin-Okounkov \cite{eskin_asymptotics_2001} were able to use this idea to compute the volumes in the case when the ramification data contained only partitions into even parts; the case when the ramification data consisted of partitions into odd parts required the introduction of spin Hurwitz numbers and was solved in \cite{EOP}. One of the main results of their paper was a group theoretic computation of the parity of a ramified covering of a spin surface. This result was applied in the case of a torus with odd spin structure to give a formula for the spin Hurwitz numbers in that case. In this paper, we will generalize the formula of \cite{EOP} to any genus (Theorem \ref{Hurwitz}).

Spin Hurwitz numbers have also appeared in the work of Lee-Parker \cite{LP1} and Maulik-Pandharipande \cite{MP} in Gromov-Witten theory. Suppose $X$ is a complex K\"ahler 2-fold with a smooth canonical divisor $\Sigma \subseteq X$. The normal bundle of $\Sigma$ in $X$ defines a spin structure on $\Sigma$. It was shown in \cite{LP1} that certain Gromov-Witten invariants of $X$ can be expressed in terms of spin Hurwitz numbers of $\Sigma$. More recently, Lee and Parker have proved a recursion formula that allows one to compute higher genus spin Hurwitz numbers in terms of the lower genus numbers using techniques from analysis and PDE \cite{LPP} \cite{lee_degree-three_2013}. The recursion formulas of \cite{LPP} also follow from the techniques of this paper (see Section \ref{leeparker}).

\begin{remark}
There does not appear to be an obvious connection between the appearance of spin Hurwitz numbers in the Gromov-Witten theory of 2-folds, and in the volume of the moduli spaces of differentials on a Riemann surface.
\end{remark}

The main tool of this paper is topological field theory (TQFT). In the 1990's, Dijkgraaf-Witten \cite{DW} and Freed-Quinn \cite{FQ} explained how to construct a topological quantum field theory from a finite gauge group. Hurwitz numbers are controlled by a 2-dimensional version of Dijkgraaf-Witten theory, with gauge group $S_n$). The formulas of Hurwitz, Frobenius, and Burnside can be recovered by analyzing the structure of this TQFT. To prove Theorem \ref{Theorem1} we will construct a (spin) TQFT taking values in the category of superalgebras:

\begin{theorem}\label{Theorem2}
For each positive integer $n$, there is a fully extended 2d spin TQFT $Z_n$ which assigns the following invariants:
\begin{itemize}
\item To a point, $Z_n$ assigns the Sergeev superalgebra, $\cY_n := \Cl_n \rtimes S_n$, where $\Cl_n$ denotes the $n$th complex Clifford algebra. 
\item To the anti-periodic spin circle, $Z_n$ assigns the even super vector space generated by \emph{odd} partitions of $n$ (i.e. partitions into odd parts). 
\item To the periodic spin circle, $Z_n$ a assigns the super vector space of \emph{strict} partitions (i.e. partitions into distinct parts) where the degree of a partition $\bmu$ is its length $\mod 2$. 
\item To a closed spin surface $\Sigma$, $Z_n$ assigns the spin Hurwitz numbers of $\Sigma$.
\end{itemize}
\end{theorem}

The spin TQFT $Z_n$ is a convenient way to encode all the information about spin Hurwitz numbers $\cH_n(\Sigma)$ for a spin surface $\Sigma$. The locality properties of TQFT mean that $Z_n$ is determined by the superalgebra $Z_n(pt) = \cY_n$, together with a linear map $\cZ(\cY_n) \to \C$. In fact, we can recover $Z_n$ from the following data: the set of simple supermodules $S=\{V^\bnu \mid \bnu \in \SP(n) \}$ of $\cY_n$, the decomposition $S=S_0 \sqcup S_1$ into type $M$ and type $Q$ supermodules (see Section \ref{Sergeev}), and complex number $t(\bnu)$ for each $\bnu \in \SP(n)$. 

Statements about spin Hurwitz numbers often reduce to formal properties of TQFT. For example, the spin Hurwitz formulas in Theorem \ref{Theorem1} are a special case of a more general formula for spin TQFT (Proposition \ref{inv}). Recursion formulas, such as those of Lee and Parker \cite{LPP}, follow directly from the existence of the TQFT $Z_n$ (see Section \ref{leeparker}). 

\subsection*{Outline of the paper}
Section \ref{introduction} we introduce some of the necessary preliminaries, and state the main results in full generality (Theorems \ref{Hurwitz} and \ref{TFT}).

In Section \ref{SpinTFT}, we give a general description of spin TQFTs and explain how to compute the invariants assigned to a closed surface by such a TQFT. The fundamental example of a spin TQFT is the parity theory which assigns $\pm 1$ to a closed spin surface according to its parity. The construction of the parity theory is left to Section \ref{AtiyahTheory}. In Section \ref{average}, we explain how to construct new TQFTs from old ones by averaging over $n$-fold covering spaces. The spin Hurwitz theory of Theorem \ref{TFT} is then constructed in Section \ref{finalsection} by averaging the parity theory over $n$-fold covers. Theorem \ref{Hurwitz} is proved by applying the computation from Section \ref{SpinTFT} to the spin Hurwitz TQFT.

\subsection*{Acknowlegements}
I would first like to thank C. Elliott, D. Freed, E. Getzler, O. Gwilliam, J. Lurie, T. Matsuoka, A. Okounkov, M. Shapiro, H.L. Tanaka, and J. Wolfson for helpful conversations and comments during the preparation of this work. I would also like to thank S. Stolz and P. Teichner for their interest and comments, and T. Parker for sharing his ideas and work in progress with J. Lee. This paper has greatly benefited from the detailed and helpful comments of (an) anonymous referee(s).

I am especially grateful to my advisor, D. Nadler who first taught me how to count covers using TQFT, for his consistent guidance and insight. Finally, I am indebted to D. Treumann, from whom the initial idea for this project came from. I have greatly benefited from his ideas and suggestions during the many conversations we have had on this subject, and from his wisdom and encouragement. 

\section{Preliminaries and Main Results}\label{introduction}

\subsection{Ordinary Hurwitz numbers}\label{SSOrdinaryHurwitz}
It will be helpful to recall how ordinary Hurwitz numbers are defined and computed using TQFT. The results in sections \ref{SSOrdinaryHurwitz} and \ref{SSTQFT} are reasonably well known, but we present them here in order make analogies with our own results.

Let $\PP (n)$ denote the set of partitions of $n$, i.e. $\bmu = (\mu _1 \leq \ldots \leq \mu _\ell)$,
where $\mu _1 + \ldots + \mu _\ell = n$. We write $\ell(\bmu) = \ell$ for the \emph{length}
of $\bmu$. Note that isomorphism classes of $n$-fold covering spaces of a circle
are in bijection with $\PP(n)$; we call the partition corresponding to such a
cover, the \emph{ramification datum} of the cover. 

Fix a closed, oriented surface $\Sigma$ with marked points $p_1 , \ldots , p_k$.
\begin{definition}
 An \emph{$n$-fold ramified cover} of $\Sigma$, with ramification data $\bmu ^1  \ldots , \bmu ^k \in \PP(n)$ 
at $p_1 , \ldots , p_k$ is a surface $\widetilde{\Sigma}$ with a
finite, continuous map $\widetilde{\Sigma} \to \Sigma$ which is an n-fold covering space
over $\Sigma - \{ p_1 , \ldots , p_k \}$, and such that the restriction to a small circle around $p_i$ 
has ramification datum $\bmu ^i$. Define the \emph{Hurwitz numbers} by
\[
 \widehat{\cH} _n (\Sigma ; \bmu ^1 , \ldots , \bmu ^k) = \sum \frac{1}{\# \Aut
(\widetilde{\Sigma}/\Sigma)},
\]
where the sum is taken over isomorphism classes of ramified covers $\widetilde{\Sigma} 
\to \Sigma$ with the specified ramification data. We will write $\widehat{\cH} _n(\Sigma, k)$
for the associated linear functional on $\C [\PP(n)]^{\otimes k}$
\end{definition}

Recall that the set $\PP(n)$ naturally parameterizes both conjugacy classes and irreducible representations of $S_n$.
\begin{definition}
 Let $\widehat{V}^\bnu$ denote the irreducible representation of $S_n$ corresponding to $\bnu \in \PP(n)$. The \emph{central characters} $\widehat{f}^\bnu _\bmu$ of $S_n$ are defined to be
 the constant multiple of the identity by which the conjugacy class $\bmu$ acts on the 
 representation $\widehat{V}^\bnu$.
\end{definition}

The following formulas go back to Hurwitz, Frobenius, and Burnside (in various forms).

\begin{theorem}[\cite{Hur} \cite{Frob} \cite{Bur}] \label{OrdinaryHurwitz}
 The numbers $\widehat{\cH} _n (\Sigma ; \bmu ^1 , \ldots , \bmu ^k)$ are given by the following
 formula
 \[
\sum \limits_{\bnu \in \PP(n)} \left( \prod _{i=1} ^k \hat{f} _{\bmu ^i} ^\bnu \right) \left( \frac{\dim \widehat{V}^\bnu}{n!} \right)^{\chi(\Sigma)}.
 \]
\end{theorem}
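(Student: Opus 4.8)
The plan is to run the classical character-theoretic argument — Frobenius's formula — which one may also package as the statement that the left-hand side is the partition function of the two-dimensional finite gauge theory with gauge group $S_n$. First I would convert the geometric count into group theory. Write $g$ for the genus of $\Sigma$ and $\Sigma^\circ = \Sigma \setminus \{p_1,\ldots,p_k\}$. An $n$-fold ramified cover of $\Sigma$ with ramification data $\bmu^1,\ldots,\bmu^k$ is the same datum as a homomorphism $\pi_1(\Sigma^\circ)\to S_n$ carrying a small loop around $p_i$ into the conjugacy class $C_{\bmu^i}$, taken up to conjugation; the standard ``groupoid cardinality'' principle (count a finite groupoid either by weighting objects by $1/\#\Aut$, or by counting arrows and dividing by the order of the ambient group) then turns the definition of $\widehat{\cH}_n$ into
\[
\widehat{\cH}_n(\Sigma;\bmu^1,\ldots,\bmu^k) \;=\; \frac{1}{n!}\,\#\Bigl\{(a_j,b_j,c_i)\in S_n^{2g}\times C_{\bmu^1}\times\cdots\times C_{\bmu^k}\ :\ \prod_{j=1}^g[a_j,b_j]\,\prod_{i=1}^k c_i = 1\Bigr\}.
\]
Using all of $S_n$ with the weight $1/n!$ (rather than only transitive subgroups) is exactly what makes this count all covers, connected or not, as in the definition.

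Next I would evaluate the count by characters. Expanding the indicator of $z=1$ as $\frac{1}{n!}\sum_{\bnu\in\PP(n)}(\dim\widehat V^\bnu)\,\chi^\bnu(z)$ and then applying Schur orthogonality, the sum telescopes by means of the two standard identities ``summing a character over a commutator'' and ``summing a character over a conjugacy class'':
\[
\sum_{a,b\in S_n}\chi^\bnu\bigl(aba^{-1}b^{-1}w\bigr) = \Bigl(\frac{n!}{\dim\widehat V^\bnu}\Bigr)^{2}\chi^\bnu(w),\qquad \sum_{c\in C_\bmu}\chi^\bnu(cw) = \hat{f}^\bnu_\bmu\,\chi^\bnu(w),
\]
the second because the class sum $C_\bmu$ acts on $\widehat V^\bnu$ by the central character $\hat{f}^\bnu_\bmu = |C_\bmu|\,\chi^\bnu(\bmu)/\dim\widehat V^\bnu$. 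Collecting the powers of $n!$ and $\dim\widehat V^\bnu$ and using $\chi(\Sigma)=2-2g$ then yields exactly the asserted formula
\[
\sum_{\bnu\in\PP(n)}\Bigl(\prod_{i=1}^k\hat{f}^\bnu_{\bmu^i}\Bigr)\Bigl(\frac{\dim\widehat V^\bnu}{n!}\Bigr)^{\chi(\Sigma)}.
\]

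Finally I would record the TQFT repackaging, since it is the template for the spin case: the $S_n$ gauge theory assigns to the circle the commutative Frobenius algebra $Z(\C[S_n])$, which as a vector space is $\C[\PP(n)]$, with the conjugacy-class sums $C_\bmu$ as distinguished basis and with trace form ``coefficient of the identity''; it assigns to a pair of pants the convolution product, and to a marked point labelled $\bmu$ the operator ``multiply by $C_\bmu$''. Passing to the basis of central idempotents $e_\bnu$ diagonalizes everything at once: the handle operator has eigenvalue $(n!/\dim\widehat V^\bnu)^2$ on $e_\bnu$, the sphere contributes $(\dim\widehat V^\bnu/n!)^2$, and the marked-point operator has eigenvalue $\hat{f}^\bnu_\bmu$, so gluing $\Sigma$ out of pairs of pants reproduces the formula with the exponent $\chi(\Sigma)=2-2g$ appearing automatically. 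There is no genuine obstacle here — the content is classical — and the only point requiring care is bookkeeping: matching the $\#\Aut$-weighting with the groupoid count, keeping the powers of $|C_\bmu|$, $n!$, and $\dim\widehat V^\bnu$ straight (observing that $|C_\bmu|$ is absorbed into $\hat{f}^\bnu_\bmu$), and checking that the exponent is the Euler characteristic of the closed surface and not of $\Sigma^\circ$.
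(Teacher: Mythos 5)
Your proposal is correct, and in fact it contains the paper's own derivation as its final paragraph: the paper proves Theorem \ref{OrdinaryHurwitz} exactly by combining Theorem \ref{OrdinaryTFT} (the $S_n$ gauge theory with $Z(S^1)=\C[S_n]^{S_n}$, conjugacy-class basis, $\theta(e_\bnu)=(\dim\widehat V^\bnu/n!)^2$, change of basis given by $\hat f^\bnu_\bmu$) with Proposition \ref{OrdinaryCalc}, which is precisely your ``diagonalize in the idempotent basis and glue pairs of pants'' step. What you add beyond the paper is the first two paragraphs: a self-contained classical proof via the groupoid count of homomorphisms $\pi_1(\Sigma^\circ)\to S_n$ and the Frobenius character-sum identities. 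The paper deliberately omits this, deferring to Hurwitz, Frobenius, and Burnside, since the whole point of its exposition is the TQFT packaging that it will later imitate in the spin setting. Your identities check out: the commutator sum contributes $(n!/\dim\widehat V^\bnu)^{2}$ per handle, the class sums contribute the central characters with $|C_\bmu|$ correctly absorbed into $\hat f^\bnu_\bmu$, and the exponent assembles to $2-2g=\chi(\Sigma)$ of the closed surface. The direct argument buys you independence from the (unproved-in-the-paper) cutting-and-gluing Theorem \ref{OrdinaryTFT}; the TQFT argument buys the structural template that generalizes to Theorem \ref{Hurwitz}.
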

Moreover, we have the following expressions for the dimensions and central characters of the symmetric
group (see e.g. \cite{Ful}):
\[
 \dim \widehat{V}^\bnu = \frac{n!}{L_1 ! \ldots L_\ell !} \prod _{i<j} (L_i - L_j),
\]
where $L_i = \mu _i + \ell - i$. The $\hat{f}^\bnu _\bmu$ satisfy
\[
 S_\bnu (X) = \sum _{\bmu \in \PP(n)} \frac{\dim \widehat{V} ^\bnu}{n!}\hat{f}^\bnu _\bmu p_\bmu (X)
\]
where $S_\bnu (X)$ are the Schur functions, $p_m(X) = \sum _{j\ge 1} x^m _j$ is a 
power sum function, and $p_{\bmu} (X) = p_{\mu _1}(X) \ldots p_{\mu _{\ell}}(X)$.

\subsection{Topological quantum field theory}\label{SSTQFT}
Let $\Sigma$ denote a closed, oriented surface and suppose that $\Sigma$ can be decomposed as $\Sigma _1 \cup _N \Sigma _2$ where the $\Sigma _{i}$ are surfaces glued along a common boundary $N$. The key observation required to prove Theorem \ref{OrdinaryHurwitz} is that the Hurwitz numbers of $\Sigma$ can be computed in terms of the Hurwitz numbers of the $\Sigma _i$ (with extra marked points corresponding to the boundary components $N$). One convenient way to organize this relationship is via a \emph{topological quantum field theory} (TQFT).

Let $\Cob^{Or}$ denote the category whose objects are closed, oriented 1-manifolds, and a morphism $N_1 \to N_2$ is an oriented surface $\Sigma$ with an identification $\partial \Sigma \equiv N_1 \sqcup \overline{N_2}$ (up to diffeomorphism). This has a symmetric monoidal structure given by disjoint union of manifolds. Let $\Vect$ denote the category of complex vector spaces with its symmetric monoidal structure given by tensor product.

\begin{definition}
A TQFT is a symmetric monoidal functor $Z: \Cob^{Or} \to \Vect$.
\end{definition}

Given any TQFT $Z$, the vector space $V=Z(S^1)$ naturally has the structure of a commutative algebra, where the multiplication is given by applying the functor $Z$ to the cobordism $S^1 \sqcup S^1 \to S^1$ given by a pair of pants (a sphere with three discs removed). Moreover, there is a map $t : V \to \C$ given by applying $Z$ to the disc, considered as a cobordism $S^1 \to \emptyset^1$. The map $t$ makes $V$ into a \emph{commutative Frobenius algebra}, i.e. the composite $V \otimes V \xrightarrow{mult} V \xrightarrow{t} \C$ is a non-degenerate inner product on $V$.

If this algebra is \emph{semisimple}, by Wedderburn's theorem $V$ has a basis $e_s$ of orthogonal idempotents. 
\begin{proposition}\label{OrdinaryCalc}
Let $Z$ be a TQFT with the algebra $V=Z(S^1)$ semisimple, and let $e_s$, $s\in S$, be the basis of orthogonal idempotents of $V$. Then for any any closed, oriented surface $\Sigma$, we have:
\[
Z(\Sigma) = \sum _{s\in S} t (e_s)^{\chi (\Sigma)/2}.
\]
\end{proposition}
Proposition \ref{OrdinaryCalc} is proved by decomposing $\Sigma$ into discs and pairs of pants (see Proposition \ref{inv} for more details). The following theorem gives a precise description of how Hurwitz numbers behave under such decompositions of $\Sigma$.

\begin{theorem}\label{OrdinaryTFT}
For each positive integer $n$, there is a TQFT $\widehat{Z}_n$ so that:
\begin{itemize}
\item To a circle $\widehat{Z}_n$ assigns the vector space $\C [\PP(n)]$.
\item To a surface $\Sigma$ with $k$ marked points, considered as a cobordism $(S^1) ^{\sqcup k} \to \emptyset^1$, $\widehat{Z}_n$ assigns $\widehat{\cH}_n(\Sigma):\C[\PP(n)] ^{\otimes k} \to \C$.
\end{itemize}
Moreover, the commutative Frobenius algebra structure on $\C[\PP(n)]$ is given by identifying it with the algebra of class functions $\C[S_n]^{S_n}$ such that a partition $\bmu$ maps to the conjugacy class of $S_n$ with cycle type $\bmu$. The map $t$ takes a class function $f$ to $(1/n!)f(1)$.
\end{theorem}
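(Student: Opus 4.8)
The plan is to build the TQFT $Z_n$ directly as the ``finite path integral'' (groupoid cardinality of a mapping groupoid) for the finite gauge group $S_n$, and then to identify its structure maps with the representation-theoretic formulas. First I would recall the standard construction: for a 1-manifold $N$, set $Z_n(N)$ to be the $\C$-vector space of functions on the set $\pi_0\big(\mathrm{Map}(N, BS_n)\big)$ of isomorphism classes of principal $S_n$-bundles on $N$; for $N = S^1$ this set is exactly $S_n/\mathrm{conj} \cong \PP(n)$, so $Z_n(S^1) = \C[\PP(n)]$. For a surface-with-boundary $\Sigma : N_1 \to N_2$, define $Z_n(\Sigma)$ by the pull-push formula: to a bundle on $N_1$ one assigns the function on bundles on $N_2$ whose value is the sum over isomorphism classes of $S_n$-bundles on $\Sigma$ restricting appropriately, each weighted by $1/\#\Aut$ of the bundle on $\Sigma$ rel boundary. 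A standard check (e.g.\ as in \cite{FHLT} or \cite{FQ}) shows this is a symmetric monoidal functor: functoriality under gluing is the multiplicativity of groupoid cardinality along a fiber product of mapping groupoids, which reduces to an orbit-counting (Burnside-type) identity.

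The second step is the bijection between $S_n$-bundles on a surface and ramified covers. An $n$-fold ramified cover of $\Sigma$ with ramification data $\bmu^i$ at $p_i$ is the same as an $S_n$-bundle on $\Sigma \setminus\{p_1,\dots,p_k\}$ (equivalently, a conjugacy class of homomorphisms $\pi_1 \to S_n$) together with the constraint that the monodromy around $p_i$ lies in the conjugacy class $\bmu^i$; the total space $\widetilde\Sigma$ is the associated $\{1,\dots,n\}$-bundle, and $\Aut(\widetilde\Sigma/\Sigma)$ is exactly the automorphism group of the $S_n$-bundle (the centralizer of the image of $\pi_1$). Hence the linear functional $\C[\PP(n)]^{\otimes k} \to \C$ that $Z_n$ assigns to $(\Sigma; p_1,\dots,p_k)$, viewed as a cobordism $(S^1)^{\sqcup k}\to\emptyset$ with the marked points treated as incoming boundary circles shrunk to punctures, is precisely $\widehat\cH_n(\Sigma;\,\cdot\,)$.

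Third, I would identify the Frobenius structure on $Z_n(S^1) = \C[\PP(n)]$. The pair-of-pants cobordism $S^1 \sqcup S^1 \to S^1$ has mapping groupoid $\mathrm{Map}(\text{pants}, BS_n)$, and the pull-push over it computes, on basis elements indexed by conjugacy classes, the usual convolution product on class functions; this is the content of identifying $\C[\PP(n)]$ with $\C[S_n]^{S_n}$ so that a partition $\bmu$ goes to (the indicator of) its conjugacy class. The counit $\theta$ is $Z_n$ of the disc $S^1 \to \emptyset$: the mapping groupoid is $\ast /\!/ S_n$ for the trivial bundle relative to a prescribed boundary monodromy $g$, contributing $1/\#\mathrm{Centralizer}(g)\cdot(\text{something})$; a direct count shows $\theta$ picks out $1/n!$ times the coefficient of the identity, i.e.\ $\theta(f) = f(e)/n!$ on class functions. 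Semisimplicity of this Frobenius algebra is classical (it is the center of $\C[S_n]$), and Proposition \ref{OrdinaryCalc} combined with the character-orthogonality computation of the idempotents $e_\bnu$ and the values $\theta(e_\bnu) = (\dim\widehat V^\bnu/n!)^2$ then recovers Theorem \ref{OrdinaryHurwitz}, which serves as a consistency check on the whole construction.

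The main obstacle I anticipate is bookkeeping rather than conceptual: making the pull-push formula genuinely functorial requires care about automorphisms of bundles \emph{relative to the boundary} versus absolute automorphisms, and about basepoints/orientations when gluing, so that the groupoid-cardinality weights compose correctly; this is exactly where the Burnside/orbit-counting lemma enters. The identification of the pair-of-pants operation with convolution, and of $\theta$ with normalized evaluation at $e$, is then a short explicit computation with the fundamental group of the pair of pants and of the disc. I would present the relative-automorphism bookkeeping carefully and relegate the convolution identity to a brief lemma.
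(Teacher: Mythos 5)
Your proposal is correct and matches the paper's approach: the paper presents Theorem \ref{OrdinaryTFT} as well known (citing Dijkgraaf--Witten, Freed--Quinn, and \cite{FHLT}), and its own general averaging construction in Section \ref{average} applied to the trivial theory is exactly the finite path integral over $\mathrm{Map}(-,BS_n)$ with the pull-push and groupoid-cardinality weights you describe. The bookkeeping points you flag (relative versus absolute automorphisms, and the normalization identifying the disc with $f\mapsto f(e)/n!$ via the unique cover with $\#\Aut=n!$) are precisely the ones the paper handles in the analogous spin computation.
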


The algebra $\C[S_n]^{S_n}$ is semisimple; the change of basis matrix between the conjugacy classes $\bmu$ and the orthogonal idempotents $e_\bnu$ is given by the central characters $\hat{f}^\bnu _\bmu$, and $t (e_\bnu) = \left(\frac{\dim \widehat{V}^\bnu}{n!} \right)^2$. Combining Theorem \ref{OrdinaryTFT} with Proposition \ref{OrdinaryCalc}, we deduce the formula for Hurwitz numbers in Theorem \ref{OrdinaryHurwitz}.

\begin{remark}
There is a natural analogue of this story, where $n$-fold covers are replaced by $G$-bundles for any finite group $G$, see \cite{FHLT}.
\end{remark}

\subsection{Spin Hurwitz numbers}\label{SSSHN}
Now suppose $\Sigma$ is a closed surface equipped with a \emph{spin structure}. Such a structure is equivalent to choosing either of the following pieces of data:
\begin{itemize}
 \item A quadratic form on $H^1(\Sigma , \ZZ/2\ZZ)$ which refines the intersection pairing,
 \item A square root, $K^{1/2}$ of the canonical bundle (for any complex structure on $\Sigma$).
\end{itemize}

\begin{definition}\label{definitionparity}
 In terms of the descriptions above, the \emph{parity} of $\Sigma$ (with its spin structure) is defined to be
 \begin{itemize}
  \item The Arf invariant of the corresponding quadratic form on $H^1(\Sigma, \ZZ/2\ZZ)$ ,
  \item The dimension mod 2 of $H_{hol}^0(\Sigma , K^{1/2})$ .
 \end{itemize}
\end{definition}
(See \cite{Jo} \cite{At} \cite{Mum}).

Recall that there are two spin structures on the circle $S^1$: the \emph{anti-periodic} (or \emph{Neveu-Schwarz}) circle $S^1 _{ap}$ which bounds a spin disc, and the \emph{periodic} (or \emph{Ramond}) circle $S^1 _{per}$ which does not \cite{LM}.

Two subsets of the set $\PP(n)$ of partitions of $n$ play an important role in this paper:
\begin{itemize}
\item The set of \emph{odd partitions}, $\OP (n) = \left\{ \mu \in \PP (n) : \ \mu_i \ \text{are all odd} \right\}$.
\item The set of \emph{strict partitions}, $\SP(n) = \left\{ \mu \in \PP(n) : \ \mu_i \ \text{are all distinct} \right\}$.
\end{itemize}
 We will see that these are analogues of the dual roles played by $\PP(n)$ for ordinary Hurwitz numbers: indexing conjugacy classes and representations of the symmetric group.\footnote{It was shown by Euler that the cardinalities of $\SP(n)$ and $\OP(n)$ are equal.}

Suppose we have a cover $\widetilde{\Sigma} \to \Sigma$ ramified at $p_1 , \ldots , p_k$. We can canonically lift the spin structure on $\Sigma$ to $\widetilde{\Sigma}$ precisely when the ramification data are all odd partitions (in that case, every component of the covering space over each boundary circle is anti-periodic, thus the spin structure extends over the disc it bounds).

\begin{definition}\label{Hurwitznumbers}
The \emph{spin Hurwitz numbers} are defined by:
\[
\cH_n(\Sigma ; \bmu ^1 , \ldots , \bmu ^k) = \sum \frac{(-1)^{p(\widetilde{\Sigma})}}{ \# \Aut{\left( \widetilde{\Sigma}/\Sigma \right)}} ,
\]
where the sum is taken over isomorphism classes of branched covers 
$\widetilde{\Sigma}$ of $\Sigma$ with ramification data $\bmu^1 , \ldots , \bmu ^k \in \OP(n)$ 
at $p_1 , \ldots p_k$. We will write $\cH_n(\Sigma , k)$ for the associated functional 
on the vector space $\C [\OP (n)]^{\otimes k}$.
\end{definition}

One of the main results of this paper is a combinatorial expression for the spin Hurwitz numbers. The unramified case was Theorem \ref{Theorem1}; this involved the numbers $d(\bnu)$, for $\bnu \in \SP(n)$. To state the general case we will need to introduce the matrix $f^\bnu _\bmu$, for $\bmu \in \OP(n)$ and $\bnu \in \SP(n)$. This matrix has a combinatorial definition in terms of the Schur $Q$-functions, a certain family of symmetric polynomials $Q_\bnu(X) = Q_\bnu(x_1,x_2, \ldots)$ which were introduced by Schur \cite{Sch} in his study of the spin representations of the symmetric group (the representation theoretic meaning of $f^\bnu _\bmu$ will be explained in Section \ref{centralcharacters}). 

The $Q$-functions were defined by Schur as follows (see also \cite{Joz2}). First consider
\[
Q(t) = \sum_{n\geq 0} Q_n(X)t^n := \prod _{j\geq 0} \frac{1+x_jt}{1-x_jt}. 
\]
Now we define $Q_{pq}(X)$ by
\[
Q(r, s) = \sum _{p,q \geq 0} Q_{pq}(X) r^p s^q := (Q(r) Q(s) -1)\frac{r-s}{r+s}.
\]
Finally, for $\bnu \in \SP(n)$, we set $Q_\bnu(X) = Pf(Q_{\nu_i \nu_j}(X))$ (if $\ell(\bnu)$ is odd, we set $\nu_{\ell(\bnu)+1} = 0$ so that $(Q_{\nu_i \nu_j}(X))_{ij}$ is an even order antisymmetric matrix and its Pfaffian is well defined).

\begin{theorem}[The spin Hurwitz formula]\label{Hurwitz}
The spin Hurwitz numbers $\cH_n(\Sigma, \bmu ^1 , \ldots , \bmu ^k)$ are given by:
\[
2^{(\sum_i(\ell(\bmu^i) -n) -n\chi(\Sigma))/2} \sum_{\bnu \in \SP(n)} (-1)^{p(\Sigma) \ell(\bnu)} 
\left(\prod_i f^\bnu _{\bmu^i} \right)\left(\frac{d(\bnu)}{n!}\right)^{\chi(\Sigma)},
\]
where:
\[
d(\bnu) = 2^{n -\ell({\bnu})/2} \frac{n!}{\nu _1 ! \ldots \nu _\ell !} \prod _{p<q} \frac{\nu _q - \nu _p}{\nu _p + \nu _q},
\]
and the numbers $f_{\bmu} ^{\bnu}$ satisfy
\[
Q_{\bnu} (X) = 2^{\ell (\bnu)/2} \sum _{\bmu \in \OP (n)} \frac{d(\bnu)}{n!} f_{\bmu} ^{\bnu} p_{\bmu} (X),
\]
($p_m(X) = \sum_{j\geq 1} x_j ^m$ is a power sum function, and $p_\bmu(X) = p_{\mu_1}(X) \ldots p_{\mu_\ell}(X)$).
\end{theorem}

\subsection{Semisimple superalgebras}\label{Sergeev}
In this section we will recall the basic theory of semisimple superalgebras, after J\'ozefiak \cite{Joz0}.

We will use the term \emph{super vector space} to denote a $\Z/2\Z$ graded complex vector space. Let $S\Vect$ denote the category of super vector spaces and (degree preserving) linear maps. This has a natural symmetric monoidal structure, where the symmetric structure incorporates the sign rule: the isomorphism $V \otimes W \to W \otimes V$ is given by $v\otimes w \mapsto (-1)^{|v||w|} w \otimes v$ where $v$ and $w$ are homogeneous elements of degree $|v|$ and $|w|$. 

A \emph{superalgebra} is an algebra object in $S\Vect$. The notions of supercommutative, opposite superalgebra $A^{op}$, supercentre $\cZ(A)$ and superabelianization $\Ab(A)$ of a superalgebra $A$ in this paper are all defined following the sign rule. A supermodule for a superalgebra is just a module with a compatible $\Z/2\Z$ grading. Two superalgebras $A$ and $B$ are Morita equivalent if there are superbimodules $\lrsub AMB$ and $\lrsub BNA$ such that $M \otimes_B N \simeq A$ and $N \otimes _A M \simeq B$. Given a superalgebra $A$, and two supermodules $M$ and $N$, we denote by $\Hom_A(M,N)$ the space of degree preserving $A$-linear maps, and by $\HOM_A(M,N)$ the space of \emph{all} $A$-linear maps with its natural $\Z/2\Z$-grading.

A superalgebra is called \emph{semisimple} if every supermodule is a direct sum of simple supermodules. Wedderburn theory for semisimple superalgebras states that every simple superalgebra is isomorphic to one of the following:
\begin{itemize}
\item $M(r,s) = \END ( \C ^r \oplus \C^s[1])$, the superalgebra of (non degree preserving) endomorphisms.
\item $Q(d)$, the subalgebra of $M(d,d))$, consisting of matrices of block form $\left( \begin{array}{cc} C&D \\ D&C \end{array}\right)$.
\end{itemize}
Every semisimple superalgebra is isomorphic to a product of such simple superalgebras, with one simple factor for each simple module. 

The superalgebras $M(r,s)$ are all Morita equivalent to the trivial algebra $\C=M(1,0)$, and the superalgebra $Q(d)$ are Morita equivalent to $Q(1) = \C [\eta]$ where $\eta$ has degree 1 and $\eta ^2 =1$. 
Simple supermodules for a semisimple algebra will be referred to as type $M$ or type $Q$ depending on which factor they correspond to.

\begin{remark}
The supercentre $\cZ(A)$ of every semisimple superalgebra $A$ is in purely even degree, and this has a basis of orthogonal idempotents corresponding to simple supermodules. The superabelianization $\Ab(A)$ (the quotient of $A$ by the subspace of all supercommutators) is a vector space of the same dimension, however the summands corresponding to supermodules of type $M$ are in even degree and the those corresponding to supermodules of type $Q$ are in odd degree (to verify this claim, it is enough to check for the superalgebras $\C$ and $Q(1)$).
\end{remark}

The $n$th Clifford algebra $\Cl_n$ is defined as the tensor product $Q(1)^{\otimes n}$. Explicitly, $\Cl_n$ is generated by odd elements $\eta_1 , \ldots, \eta_n$ where $\eta_i ^2 = 1$, and $\eta_i \eta_j = - \eta_j \eta_i$ whenever $i \neq j$. The Clifford algebra $\Cl_n$ is a simple superalgebra. When $n$ is even, $\Cl_n$ is Morita equivalent to $\C$. When $n$ is odd, $\Cl_n$ is Morita equivalent to $\Cl_1$.

\begin{definition}\label{SergeevAlg}
The symmetric group $S_n$ acts on $\Cl_n$ by permuting the generators $\eta_i$. The \emph{Sergeev superalgebra} $\cY_n$ is defined to be the semidirect product $\Cl_n \rtimes S_n$. 
\end{definition}

\subsection{Spin representations and twisted group superalgebras}\label{subsectionspinreps}

Suppose $G$ is a finite group, together with a central extension
\[
 \langle \varepsilon \rangle \hookrightarrow \widetilde{G} \xrightarrow{\pi} G.
\]
where $\varepsilon ^2 = 1 $. A \emph{spin representation of $G$} is defined to be a representation of $\widetilde{G}$ in which $\varepsilon$ acts by $-1$. Spin representations of $G$ are the same thing as modules for the \emph{twisted group algebra}
\[
\cT(G) = \C[\widetilde{G}]/(\varepsilon + 1).
\]
(we suppress the data of the central extension from the notation).

Suppose in addition that $G$ is equipped with a $\Z/2\Z$-grading determined by an index 2 subgroup $G_0$ of $G$. Then $\widetilde{G}$ acquires a grading via $\widetilde{G}_0$ is the preimage of $G_0$, and the twisted group algebra is endowed with the structure of a superalgebra. For each supermodule $V$ of $\cT(G)$, let $|V|$ denote the spin representation of $G$ obtained by forgetting the grading.

\begin{proposition}[\cite{Joz0}]
The superalgebra $\cT(G)$ is a semisimple superalgebra. For each simple supermodule $V$ of $\cT(G)$ ,
\begin{itemize}
\item if $V$ is of type $M$, $|V|$ is also simple, and
\item if $V$ is of type $Q$, $|V|$ splits as a direct sum of two non-isomorphic simple modules $|V|_+$ and $|V|_-$.
\end{itemize}
Every simple spin representation of $G$ arises as $|V|$, $|V|_+$, or $|V|_-$ for a unique simple supermodule $V$.
\end{proposition}

Thus the representation theory of the superalgebra $\cT(G)$ encodes the spin representations of $G$. Here are the main examples that we will consider in this paper:
\begin{enumerate}
\item Let $R_n$ denote the group $(\Z/2\Z)^n$, and label the generators $\eta_1 , \ldots, \eta_n$. This group has a grading in which each $\eta_i$ has degree 1, and a central extension $\widetilde{R_n}$ where the central element $\varepsilon$ satisfies $\eta_i \eta_j = \varepsilon \eta_j \eta_i$ when $i \neq j$. The twisted group superalgebra $\cT(R_n)$ is isomorphic to $\Cl_n$.
\item Let $B_n$ denote the \emph{hyperoctahedral group} $R_n \rtimes S_n$ with a grading induced from the grading on $R_n$. The group $\widetilde{B_n} = \widetilde{R_n} \rtimes S_n$ defines a central extension of $B_n$ called the \emph{Sergeev group}. The twisted group algebra $\cT(B_n)$ is canonically identified with the Sergeev algebra $\cY_n = \Cl_n \rtimes S_n$.
\item The symmetric group $S_n$ also has a central extension $\widetilde{S_n}$. To define $\widetilde{S_n}$, let $t_1 , \ldots, t_{n-1}$ denote the standard Coxeter generators of $S_n$. Then $\widetilde{S_n}$ is generated by the $t_i$ together with the central involution $\varepsilon$, subject to the usual relations $t_i^2 =1$ and $t_i t_{i+1} t_i = t_{i+1} t_i t_{i+1}$, but now we have $t_i t_j = \varepsilon t_j t_i$ when $|i-j| >1$. The symmetric group has a grading given by the parity. 
\end{enumerate}

\subsection{Central characters of spin representations}\label{centralcharacters} 
Recall that for a finite group $H$, each conjugacy class $C$ determines a central element in the group algebra. This acts on a simple representation $M$ by a scalar multiple $f^M_C$ of the identity (equivalently, $f^M_C$ is the change of basis matrix between the basis of conjugacy classes and the basis of orthogonal idempotents in the centre of the group algebra of $H$). For a given simple representation $M$, the number $f^M_C$ is called the \emph{central character} of $M$; it is related to the \emph{character} (trace) $\chi_M$ by the equation 
\[
\chi_M (C) = \frac{\dim(M)}{\# C} f^M_C.
\]

Now let us define the central characters of \emph{spin} representations of a group $G$ (with a grading and central extension as in Section \ref{subsectionspinreps}). Let $C$ be a conjugacy class in $G$, and consider the preimage $\pi^{-1}(C)$ of $C$ in $\widetilde{G}$. There are two possibilities: either $\pi^{-1}(C)$ is a conjugacy class in $\widetilde{G}$, or $\pi^{-1}(C)$ splits as a disjoint union of two conjugacy classes. We will say that the class $C$ \emph{splits in $\widetilde{G}$} if the second case occurs.

If $C$ splits in $\widetilde{G}$, choose a conjugacy class $D$ in $\widetilde{G}$ such that $\pi^{-1}(C)= D \sqcup \varepsilon D$. Then the image of $D$ in $\cT(G)$ is in the supercentre of $\cT(G)$, and the collection of such elements (as $C$ varies over the conjugacy classes in $G$ of the second type) forms a basis for $\cZ(\cT(G))$. If $C$ does not split in $\widetilde{G}$, then the image of $\pi^{-1}(C)$ in $\cT(G)$ is zero, as $\varepsilon \pi^{-1}(C) = \pi^{-1}(C)$. Such conjugacy classes do not contribute to the character, and we will disregard them.

\begin{remark}
This basis is canonical only up to sign as we could have chosen the image of the class $\varepsilon D$ instead of $D$. However, in the cases of interest, there will always be a preferred choice of lift $D$.
\end{remark}

\begin{definition}
Given a simple supermodule $V$ of $\cT(G)$ its central character $f^V_C$ is the scalar multiple of the idenitity on which the image of the (chosen) lift $D$ of $C$ acts on $V$.
\end{definition}

\begin{proposition}[\cite{Ser} \cite{Joz2}] \label{propositionsergeev}
There is a bijection between the set of strict partitions $\lambda \in \SP(n)$ and supermodules $V^\lambda$ of $\cY_n$ such that $V_\lambda$ is of type $M$ if $\ell(\lambda)$ is odd, and of type $Q$ is $\ell(\lambda)$ is even.
\end{proposition}

Recall that $\cY_n$ is canonically identified with the twisted group algebra $\cT(B_n)$ of the hyperoctahedral group. Conjugacy classes $C_\bmu$ in the hyperoctahedral group which split in the Sergeev group $\widetilde{B_n}$ are indexed by odd partitions $\bmu \in \OP(n)$. They are precisely the conjugacy classes generated by a permutation of cycle type $\bmu$ under the embedding $S_n \hookrightarrow B_n$. We also have an embedding $S_n \hookrightarrow \widetilde{B_n}$ and the conjugacy class $D_\bmu$ of a permuation of cycle type $\bmu$ in $\widetilde{B_n}$ defines a lift of $C_\bmu$. 

Thus we can define the central characters $f^{V^\bnu} _{C_{\bmu}}$ of the Sergeev algebra $\cY_n$. Recall that we defined numbers $f^\bmu_\bnu$ in Section \ref{SSSHN} in terms of the Schur $Q$-function.

\begin{proposition}[\cite{Ser} \cite{Joz2}]\label{propositionsergeev2}
We have $f^{V^\bnu}_{C_{\bmu}} = f^\bnu _\bmu$. Moreover, the numbers $d(\bnu)$ are equal to $\dim V^\bnu$ if $\ell(\bnu)$ is even, and $(1/\sqrt{2}) \dim V^\bnu$ if $\ell(\bnu)$ is odd.
\end{proposition}

Let $\Delta_\bmu$ denote the image of the conjugacy class $D_\bmu$ in $\cZ(\cY_n)$, where $\bmu \in \OP(n)$. It will be useful to have an explicit description of the elements $\Delta_\bmu$. Given an $m$-cycle $\sigma = (a_1 \ldots a_m)$ in $S_n$ where $m$ is odd, define the element $\Delta_\sigma \in \cY_n$ by
\[
\sum (\eta_{a_1}^{c_1} \ldots \eta_{a_m} ^{c_m} ) \otimes \sigma,
\]
where the sum is over $m$-tuples $(c_1 , \ldots , c_m) \in (\Z/2\Z)^m$ such that $c_1 + \ldots c_m = 0 \in Z/2\Z$.
Now, given any element $\sigma \in S_n$ whose cycle type is an odd partition of $n$, define $\Delta_\sigma$ by $\Delta_{\sigma_1} \ldots \Delta_{\sigma_\ell}$ where $\sigma_1 \ldots \sigma_\ell$ is a factorization of $\sigma$ into disjoint cycles. 
\begin{proposition}\label{propositiondelta}
The element $\Delta_\bmu \in \cZ(\cY_n)$ is given by $\sum \Delta_\sigma$, where the sum is over $\sigma \in S_n$ with cycle type $\bmu$. 
\end{proposition}

For example $\Delta_{(3)} \in \cZ(\cY_3)$ is given by
\begin{align*}
1 \otimes (123) + \eta_1 \eta_2 \otimes (123) + \eta_2 \eta_3 \otimes (123) + \eta_3 \eta_1 \otimes (123)\\
1 \otimes (132) + \eta_1 \eta_3 \otimes (132) + \eta_3 \eta_2 \otimes (132) + \eta_2 \eta_1 \otimes (132).
\end{align*}

\begin{remark}
Proposition \ref{propositionsergeev2} above gives a combinatorial description of the spin characters of the hyperoctahedral group in terms of Schur $Q$-functions. However, Schur's original use of the $Q$-functions was to describe the spin characters of the symmetric group. The relationship between the spin characters of $S_n$ and $B_n$ is characterized by an isomorphism of superalgebras (see \cite{Yam} \cite{Kle}):
\[
\cY_n := \Cl_n \rtimes S_n \cong \Cl_n \otimes \cT(S_n).
\]
\end{remark}



\subsection{Extended TQFT and $2$-categories}\label{extended}
The definition of TFQT given in Section \ref{SSTQFT} assigns a vector space to a closed $1$-manifold and a linear map to a $2$-dimensional cobordism. A closed $2$-manifold is thus assigned a number, thought of as a linear map $\C \to \C$. There is a natural way to extend this definition which assigns an algebra (or alternatively, a linear category) to a $0$-manifold, and a bimodule to a $1$-dimensional cobordism. The vector space assigned to a closed $1$-manifold in this setting can be thought of as a $\C-\C$-bimodule. To properly formulate the definition of extended TQFT, we will need to work with $2$-categories.

Throughout this paper, the term \emph{2-category} will mean a weak 2-category, (or bicategory). We refer the reader to \cite{Ben} \cite{SP} for a more detailed discussion of 2-categories, but 
briefly a 2-category $\cC$ consists of the following data:
\begin{itemize}
\item A set, $\cC_0$ of \emph{objects};
\item A $1$-category $\Hom _\cC (x,y)$ of \emph{$1$-morphisms} for each $x,y \in \cC_0$ (the morphisms in these $1$-categories are called \emph{$2$-morphisms} and composition is referred to as \emph{vertical composition});
\item A functor $\circ: \Hom_\cC(x,y) \times \Hom_\cC(y,z) \to \Hom_\cC(x,z)$ called \emph{horizontal composition};
\item Identity $1$-morphisms $1_x \in \Hom(x,x)$ for each object $x$;
\item Natural isomorphisms called \emph{left and right unitors} for each pair of objects, establishing the expected properties of the identity $1$-morphism, and a natural transformation called an \emph{associator}, establishing the associativity property of horizontal composition. These satisfy some natural coherence axioms.
\end{itemize}
A symmetric monoidal structure on a 2-category $\cC$ is a functor $\otimes : \cC \times \cC \to \cC$ with fixed isomorphisms $x \otimes y \cong y \otimes x$, together with a unit object $1_\cC$, and various other natural transformations relating these objects (see \cite{SP}, Section 2.2). There is a notion of a \emph{symmetric monoidal functor} between two symmetric monoidal 2-categories.

Our primary examples of symmetric monoidal $2$-categories are $2$- categories of cobordisms. For example, the $2$-category $\Bord ^{or}$ of oriented cobordisms can be described as follows (see \cite{SP} for a more careful definition).
\begin{itemize}
\item Objects of $\Bord^{Or}$ are oriented 0-manifolds, i.e. disjoint unions of points equipped with an orientation. 
\item A 1-morphism between objects $N_0$ and $N_1$ is a cobordism between $N_0$ and $N_1$, i.e. an oriented 1-manifold with boundary, $M$ together with an oriented diffeomorphism $\partial M \cong \overline{N_0} \sqcup N_1$, where $\overline{N}$ denotes the opposite orientation on $N$. 
\item If $M_0$ and $M_1$ are both 1-cobordisms between $N_0$ and $N_1$, then a 2-morphism between $M_0$ and $M_1$ is a cobordism $\Sigma$ between $M_0$ and $M_1$ which is trivial on the boundary; more precisely, $\Sigma$ is an oriented $2$-manifold with corners together with an isomorphism 
\[
\partial \Sigma \simeq \overline{M_0} \sqcup_{N_0 \sqcup \overline{N_1}} \left( (\overline{N_0} \sqcup N_1) \times [0,1] \right)  \sqcup_{\overline{N_0} \sqcup N_1} M_1.
\]
\item The horizontal and vertical compositions are both described by gluing cobordisms along the relevant parts of the boundary (this requires some care to make precise).
\item The symmetric monoidal structure is given by disjoint union of manifolds.
\end{itemize}

Note that the 1-category $\Hom_{\Bord^{Or}}(\emptyset ^0, \emptyset^0)$ is equivalent to $\Cob^{Or}$. We would to extend the notion of TQFT from Section \ref{SSTQFT} to a symmetric monoidal functor from $\Bord^{Or}_2$ to a suitable target symmetric monoidal 2-category $\cC$, with the property that $\Hom_{\cC}(1_\cC, 1_\cC)$ is equivalent to $\Vect$. A common choice for $\cC$ is the $2$-category $\Alg$ which can be described as follows.
\begin{itemize}
\item Objects of $\Alg$ are $\C$-algebras,
\item A 1-morphism between objects $A$ and $B$ is an $A-B$-bimodule, $M$.
\item If $M$ and $M^\prime$ are both $A-B$-bimodules, then a 2-morphism between $M$ and $M^\prime$ is a homomorphism of bimodules.
\item Horizontal composition is given by relative tensor product, and vertical composition is given by composition of homomorphisms. 
\item The symmetric monoidal structure is given by tensor product of algebras.
\end{itemize}

\begin{definition}
A \emph{fully extended, oriented, 2d TQFT} is a symmetric monoidal functor $\Bord^{Or} \to \Alg$.
\end{definition}

\begin{example}
The Hurwitz TQFT from Theorem \ref{OrdinaryTFT}, extends to a fully extended TQFT which assigns the group algebra $\C[S_n]$ to a point. 
\end{example}



\subsection{Spin TQFT} 
Just as ordinary Hurwitz numbers are controlled by a TQFT, spin Hutrwitz numbers are controlled by a \emph{spin} TQFT. The $2$-category $\Bord^{Spin}$ is defined similarly to $\Bord^{Or}$, but all manifolds are now endowed with a spin structure (see \cite{SP}, Section 3.4 for more details). To allow for interesting spin TQFTs, we will also change the target $2$-category from $\Alg$ to $S\Alg$, whose objects the algebras and bimodules come with a $\Z/2\Z$-grading, and morphisms of bimodules are compatible with that grading.

\begin{definition}
A \emph{2d spin Topological Quantum Field Theory} (or TQFT) is a symmetric monoidal functor $\Bord ^{spin} \to S\Alg$.
\end{definition}


The following theorem is analogous to Theorem \ref{OrdinaryTFT} for ordinary Hurwitz numbers.

\begin{theorem}[The spin Hurwitz theory]\label{TFT}
For each positive integer $n$, there is a fully extended 2d spin TQFT $Z_n$ which assigns the following invariants:
\begin{itemize}
\item To a point, $Z_n$ assigns the Sergeev algebra, $\cY_n$. 
\item To the anti-periodic spin circle, $Z_n$ assigns the even super vector space $\C[\OP (n)]$. This can be identified with the supercentre $\cZ(\cY_n)$. 
\item To the periodic spin circle, $Z_n$ a assigns the super vector space $\C[\SP (n)]$ where the degree of $\bmu \in \SP(n)$ is $\ell (\bmu) \mod 2$. This can be identified with the superabelianization $\Ab(\cY _n)$.
\item To a closed spin surface $\Sigma$ with $k$ punctures, considered as a cobordism $\left( S^1 _{ap} \right) ^{\sqcup k} \to \emptyset^1$, $Z_n$ assigns $\cH_n(\Sigma , k): \C [\OP(n)]^{\otimes k} \to \C$.
\end{itemize}
\end{theorem}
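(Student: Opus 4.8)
The plan is to obtain $Z_n$ directly from the cobordism hypothesis, realising it as the ``average over $n$-fold covers'' of the Arf spin theory; then three of the four assertions hold essentially by construction, and the two circle values follow from the general dimensional-reduction formalism for $2$d theories.

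First I would recall (or establish via the cobordism hypothesis, using that $\Cl_1$ generates the super Brauer group of $\C$ and is therefore invertible in $S\Alg$) the \emph{Arf} spin TQFT $\sigma$: the fully extended invertible $2$d spin TQFT with $\sigma(\mathrm{pt}) = \Cl_1$, with $\sigma(\Sigma) = (-1)^{At(\Sigma)}$ on closed spin surfaces, and — by the reduction computed below applied to the invertible object $\Cl_1$ — with $\sigma(S^1_{ap}) = \C$ in even degree and $\sigma(S^1_{per}) = \C$ in odd degree (the consistency check being the torus, whose super-dimensions $\pm 1$ reproduce the three even and one odd spin structures). Next I would apply the finite-path-integral construction of Freed--Hopkins--Lurie--Teleman, in the form developed earlier in the paper: for any spin TQFT $W$ one forms the averaged theory $\int_n W$, a fully extended $2$d spin TQFT whose value on a closed spin surface $\Sigma$ with $k$ anti-periodic punctures is $\sum \frac{1}{\#\Aut(\widetilde{\Sigma}/\Sigma)} W(\widetilde{\Sigma})$, the sum over ramified covers with ramification data in $\OP(n)$ (the cover of a small boundary circle is a union of periodic and anti-periodic circles, and bounds a spin disc exactly when all parts are odd), and whose value on a point is $W(\mathrm{pt})^{\otimes n} \rtimes S_n$, since the tautological cover of a point is the $n$-element $S_n$-set, $W$ is monoidal, and quantising along $BS_n$ sends an algebra with $S_n$-action to its crossed product, just as $\C \rtimes S_n = \C[S_n]$ in Theorem~\ref{OrdinaryTFT}. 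Here one must check that the construction is legitimate: $\Cl_1$ is fully dualizable, $n!$ is invertible, and a crossed product of a separable superalgebra by a finite group of invertible order is separable, hence fully dualizable, so $Z_n(\mathrm{pt})$ is a genuine object of $S\Alg$, and its cobordism-hypothesis data (the $\Spin(2)$-equivariance) is inherited from $\sigma$ by functoriality of $\int_n$.

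Setting $Z_n := \int_n \sigma$, the point value is $\Cl_1^{\otimes n} \rtimes S_n = \Cl_n \rtimes S_n = \cY_n$ and the surface value is $\sum \frac{(-1)^{At(\widetilde{\Sigma})}}{\#\Aut(\widetilde{\Sigma}/\Sigma)} = \cH_n(\Sigma, k)$ as a functional on $\C[\OP(n)]^{\otimes k}$; these are the first and last bullets. For the two circles I would invoke the general structure of $2$d spin TQFTs valued in $S\Alg$ (also developed earlier): for $A = Z_n(\mathrm{pt})$, composing the coevaluation and evaluation $1$-morphisms around $S^1$ computes a Hochschild-type reduction of $A$, with the periodic circle giving the graded cocentre $A \otimes_{A \otimes A^{op}} A = \Ab(A)$ and the bounding anti-periodic circle giving its parity-twisted version, which for a separable superalgebra is the supercentre $\cZ(A)$. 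Feeding in the classical super-Wedderburn decomposition $\cY_n \cong \prod_{\bnu \in \SP(n)} \End^s(V^\bnu)$ of Sergeev and J\'ozefiak, with $\End^s(V^\bnu)$ of type $M$ when $\ell(\bnu)$ is even and of type $Q$ when $\ell(\bnu)$ is odd: each supercentre summand is $\C$ in even degree, so $\cZ(\cY_n) \cong \C[\OP(n)]$ using the conjugacy-class basis from the excerpt's proposition on the supercentre; and the superabelianization of a type-$M$ (resp.\ type-$Q$) factor is $\C$ in even (resp.\ odd) degree, so $\Ab(\cY_n) \cong \C[\SP(n)]$ with $\bnu$ in degree $\ell(\bnu) \bmod 2$. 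These give the two middle bullets.

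\textbf{The main obstacle} is the construction in the second paragraph: making the finite path integral precise in the fully extended $2$-categorical spin setting and computing its value on objects with all super-signs correct — in particular that quantising $\Cl_1^{\otimes n}$ with its permutation action really yields the crossed product $\cY_n$ with its Clifford $\Z/2$-grading (rather than an invariants construction or a Morita-twisted variant), together with the bookkeeping matching anti-periodic-circle punctures with odd ramification data. Once that machinery is in place and its behaviour on points and surfaces is pinned down, the identification of the circle state spaces with $\cZ(\cY_n)$ and $\Ab(\cY_n)$ is formal, and the remainder is classical Sergeev-algebra representation theory.
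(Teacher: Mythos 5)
Your proposal is correct in outline and follows the same overall architecture as the paper: the theory is $\mathrm{Sum}_2 \circ \Fam_2(Z_1)\circ \Cov_n$ applied to the Atiyah/Arf theory $Z_1$ with $Z_1(pt)=\Cl_1$, the point value $\Cl_1^{\otimes n}\rtimes S_n=\cY_n$ and the surface value $\cH_n(\Sigma,k)$ come straight out of the description of $\mathrm{Sum}_2$ (Proposition \ref{CoveringTFT}), and you correctly isolate the real work as making the finite path integral precise. Where you genuinely diverge is in the two circle values. The paper computes $Z_n(S^1_{ap})$ and $Z_n(S^1_{per})$ \emph{directly from the groupoid of $n$-fold covers of the circle}: a cover of type $\bmu$ contributes the line $Z_1(S_\bmu)$, and a super-sign analysis of the deck group shows that covers with an even part (over $S^1_{ap}$) or a repeated part (over $S^1_{per}$) carry an automorphism acting by $-1$ on an odd line and so die in the invariants — this is how $\OP(n)$ and $\SP(n)$ emerge geometrically, and it produces a canonical basis $\delta_\bmu$ indexed by ramification data. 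The identification with $\cZ(\cY_n)$ and $\Ab(\cY_n)$ is then a separate step, done abstractly via $S^1_{per}=ev\circ coev$ and $S^1_{ap}=ev\circ ev^L$ (Proposition \ref{inv}). You instead run only the abstract dualizability computation and then feed in the super-Wedderburn decomposition of $\cY_n$ and the Sergeev--J\'ozefiak description of its supercentre. That is a valid derivation of the isomorphism types, but it only identifies the state spaces up to a choice of basis; the fourth bullet (and, downstream, the change-of-basis lemma behind Theorem \ref{Hurwitz}) requires knowing \emph{which} vector in $Z_n(S^1_{ap})$ corresponds to a given ramification datum, and that information comes precisely from the covering-space computation you skipped. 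The paper's remark after the theorem makes this point: the $\OP(n)$-basis is a feature of the construction, not a Morita invariant of $\cY_n$. So you should supplement your argument with the direct computation of the circle state spaces from $\Cov_n(S^1)$, including the sign analysis; everything else in your plan matches the paper.
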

\begin{remark}
Our construction gives a canonical basis for $Z_n(S^1 _{ap})$ indexed by $\OP(n)$, which is necessary to identify the value of $Z_n$ on a punctured surface with spin Hurwitz numbers. Strictly speaking, this basis is not an invariant of the functor $Z_n$ (it is not a Morita invariant of $\cY_n$), rather it is a feature of our construction of it. It is also important to note that the basis of $Z_n(S^1 _{per})$ given above is actually only well defined up to sign.
\end{remark}

\section{2d spin TQFTs}\label{SpinTFT}

\subsection{Dualizable and full dualizable objects}\label{dualizable}
Here we recall the notion of dualizability and full dualizability (see \cite{Lur} and Section 2 of \cite{BZN} for more details). 
\begin{definition}
Let $(\cC , \otimes)$ be a symmetric monoidal 2-category. An object $x$ in $\cC$ is \emph{dualizable} if there is another object $x^\vee$ and morphisms $ev: x \otimes x^\vee \to 1_\cC$ and $coev: 1_\cC \to x^\vee \otimes x$, such that:
\[
x \xrightarrow{1_x \otimes coev} x\otimes x^\vee \otimes x \xrightarrow{ev \otimes 1_x} x
\]
is isomorphic to $1_x$ , and
\[
x^\vee \xrightarrow{ coev \otimes 1_{x^\vee}} x^\vee \otimes x \otimes x^\vee \xrightarrow{1_{x^\vee} \otimes ev} x^\vee,
\]
is isomorphic to $1_{x^\vee}$.
\end{definition}

\begin{example}
\begin{itemize}
\item An object $V$ in $\Vect$ (considered as a 2-category with only identity 2-morphisms) is dualizable if and only if it is finite dimensional. 
\item Every object $A$ in $\Alg$ is dualizable. The dual is given by the opposite algebra $A^{op}$. The $1$-morphisms $ev: \C \to A \otimes A^{op}$ and $coev: A^{op}\otimes A \to \C$ in $\Alg$ are both given by the bimodule $A$.
\end{itemize}
\end{example}

\begin{definition}
An object $x$ of $\cC$ is \emph{fully dualizable} if it is dualizable, and the morphism $ev$ admits a left and a right adjoint (i.e. we have morphisms $ev ^L , ev^R: 1_\cC \xrightarrow{} x\otimes x^\vee$, satisfying the usual adjunction properties). 
\end{definition}

\begin{definition}\label{definitionserre}
Given a fully dualizable object in $\cC$, there are canonical maps $S,T:x\to x$, such that
\[
ev^R \simeq (S\otimes id_{x^\vee}) \circ coev,
\]
and
\[
ev^L \simeq (T \otimes id_{x^\vee}) \circ coev,
\]
(here, and for the remainder of the paper we freely use the symmetric structure to identify $x\otimes x^\vee$ with $x^\vee \otimes x$). The map $S$ is called the \emph{Serre automorphism}, and $T$ is inverse to $S$.
\end{definition}

\begin{example}[\cite{FHLT} Example 2.8, \cite{SP} A.3]\label{example2.8}
Suppose $A$ is an object of $\Alg$; we denote by $A^e$ the algebra $A \otimes A^{op}$ (which can be canonically identified with $A^{op} \otimes A$ using the symmetric monoidal structure). We will denote by $A^\ast$, the linear dual $\Hom_{\C}(A,\C)$, and by $A^!$, the bimodule dual $\Hom_{A^e}(A,A^e)$. These are both $A-A$ bimodules. An object $A$ of $\Alg$  is fully dualizable if and only if it is finite dimensional and semisimple. If this case, $ev^L$ is given by $A^!$ and $ev^R$ is given by $A^\ast$ (both considered as $\C - A^e$-bimodules). The analogous statement also holds for $S\Alg$. Alternatively, one can view $A^\ast$ (respectively, $A^!$) as morphisms $A \to A$; as such they are identified with the Serre automorphism $S$ (respectively, inverse Serre automorphism $T$) from Definition \ref{definitionserre}. An analogous statement holds for superalgebras: we write $A^\ast := \HOM_\C(A, \C)$, and $A^! := \HOM_{A^e}(A,A^e)$. 
\end{example}

\subsection{Oriented TQFTs and Frobenius algebras}
The following result indicates a fundamental relationship between TQFT and full dualizability.
\begin{proposition}\label{propositionfullydualizable}
The object $pt_+$ represented by a single point in the \emph{oriented} bordism category $\Bord^{Or}$ is fully dualizable with dual $pt_-$.\
\end{proposition}
\begin{proof}[Proof (sketch)]
The evaluation map
$
ev: pt_- \sqcup pt_+ \to \emptyset^0,
$
and coevaluation map
$
coev: \emptyset^0 \to pt_+ \sqcup pt_-,
$
are both represented by the line interval $[0,1]$. The proof of the duality statement is sometimes referred to as Zorro's Lemma; see Figure \ref{figurecoevandev}. 
\begin{figure}[h]
\caption{Duality data for $pt_+$ in $\Bord^{Or}$}\label{figurecoevandev}
\begin{tikzpicture}

\node at (0,1) [below] {$ev: pt_- \sqcup pt_+ \to \emptyset^0$};
\tqftev

\begin{scope}[xshift = 4cm]
\node at (0,1) [below] {$coev: \emptyset^0 \to pt_+ \sqcup pt_-$};
\tqftcoev
\end{scope}

\begin{scope}[xshift=7cm]
\node at (1,1) [below] {$id_{pt_+}: pt_+ \to pt_+$};
\tqftid
\end{scope}

\begin{scope}[xshift=7cm, yshift=-2cm]
\node at (1,1) [below] {$id_{pt_-}: pt_- \to pt_-$};
\tqftidminus
\end{scope}

\begin{scope}[yshift=-5cm]
\node at (0,1) {$pt_+ \xrightarrow{coev \sqcup id_{pt_+}} pt_+ \sqcup pt_- \sqcup pt_+ \xrightarrow{id_{pt_+} \sqcup ev} pt_+$};
\tqftcoev

\begin{scope}[yshift=-4cm, xshift = -2cm]
\tqftid
\end{scope}

\tqftid
\begin{scope}[yshift = -2cm]
\tqftev
\end{scope}

\draw node at (-1,-5) {$\sidewayssimeq$};

\begin{scope}[yshift=-6cm,xshift=-2cm]
\node at (1,-1) [above] {$id_{pt_+}: pt_+ \to pt_+$};
\tqftid
\end{scope} 

\begin{scope}[xshift=7cm]
\node at (0,1) {$pt_- \xrightarrow{id_{pt+} \sqcup coev} pt_- \sqcup pt_+ \sqcup pt_- \xrightarrow{ev \sqcup id_{pt_-}} pt_-$};
\tqftev

\begin{scope}[xshift=-2cm, yshift=0cm]
\tqftidminus
\end{scope}

\begin{scope}[xshift=0cm, yshift=-4cm]
\tqftidminus
\end{scope}

\begin{scope}[xshift=0cm, yshift = -2cm]
\tqftcoev
\end{scope}

\draw node at (1,-5) {$\sidewayssimeq$};

\begin{scope}[yshift=-6cm]
\node at (1,-1) [above] {$id_{pt_-}: pt_- \to pt_-$};
\tqftidminus
\end{scope} 

\end{scope}
\end{scope}
\end{tikzpicture}
\end{figure}
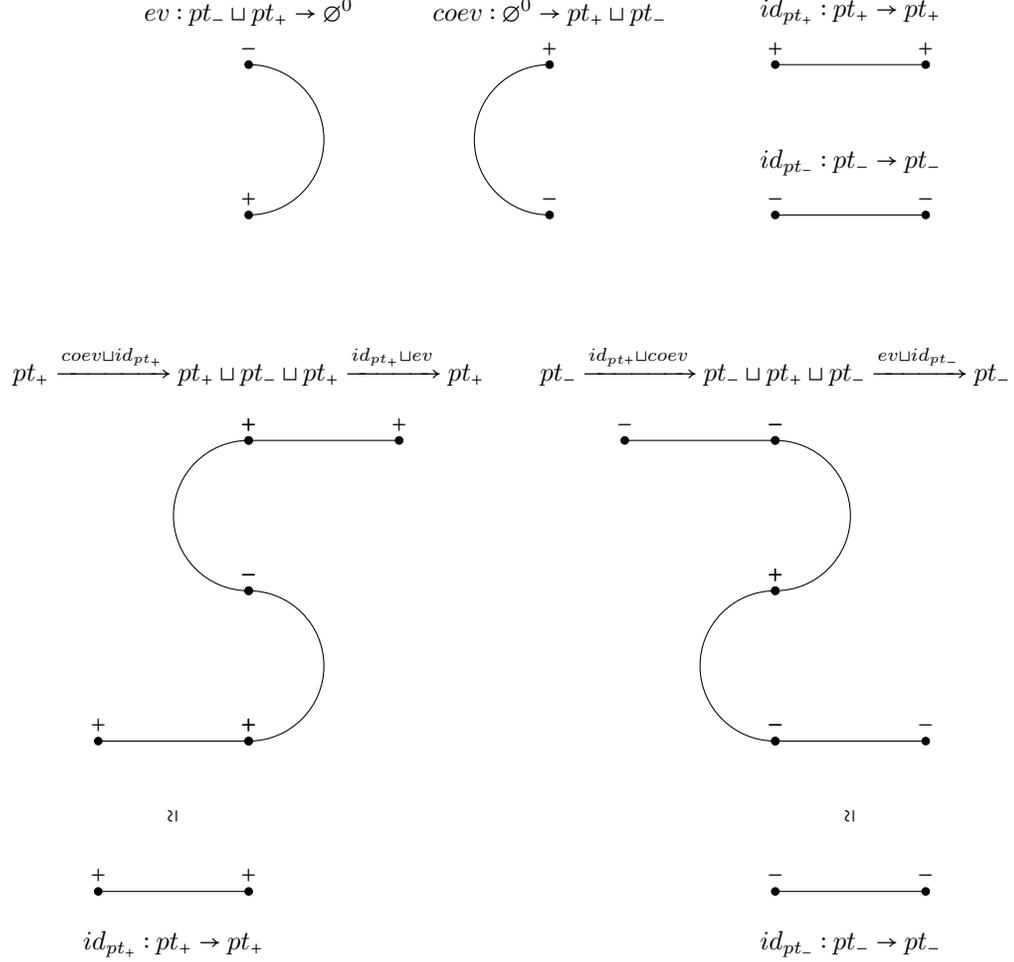

The morphism $coev$ can be identified with the left and right adjoint to $ev$ in $\Bord^{Or}$. The 2-dimensional disc, when considered as a cobordism $\CAP: ev \circ coev \to \emptyset^1$ is the counit, and the when considered as a cobordism $\SADDLE: pt_{+} \sqcup pt_- \to coev \circ ev$ is the unit in the adjunction realizing $coev$ as right adjoint to $ev$ (reading these cobordisms the other way gives the unit and counit of the other adjunction); see Figure \ref{figurefulldualizable}.
\begin{figure}[h]
\caption{Full dualizability data for $pt_+$ in $\Bord^{Or}$}\label{figurefulldualizable}
\begin{tikzpicture}

\begin{scope}[xshift=-3cm]
\node at (-2,0) {$ev \circ coev = S^1$};
\draw [->] (-2, .5) -- (-2,1.5);
\node at (-2,1) [left] {$\CAP$};
\node at (-2,2) {$\emptyset^1$};
\tqftdisc

\begin{scope}[xshift=9cm, yshift=2cm]
\node at (-4.5,-2) {$id_{pt_+} \sqcup id_{pt_-}$}; 
\draw [->] (-4.5, -1.5) -- (-4.5,-0.5);
\node at (-4.5,-1) [left] {$\SADDLE$};
\node at (-4.5,0) {$coev \circ ev$};

\tqftsaddle
\end{scope}
\end{scope}

\begin{scope}[yshift =-4cm]
\node at (-5,-2) {$ev \circ (id_{pt_+ \sqcup pt_-})$};
\draw [->] (-5,-1.5) -- (-5,-.5);
\node at (-5,-1) [left] {$id_{ev} \circ \SADDLE$};
\node at (-5,0) {$ev \circ coev \circ ev$};
\draw [->] (-5,.5) -- (-5,1.5);
\node at (-5,1) [left] {$\CAP \circ id_{ev}$};
\node at (-5,2) {$id_{\emptyset^0} \circ ev$};

\tqftadjunction
\node at (2,-1) {$\simeq$};
\begin{scope}[xshift=3cm]
\node at (3,-2) {$ev$};
\draw [->] (3,-1.5) -- (3,-.5);
\node at (3,-1) [right] {$id_{ev}$};
\node at (3,0) {$ev$};

\tqftidev

\end{scope}
\end{scope}
\end{tikzpicture}
\end{figure}
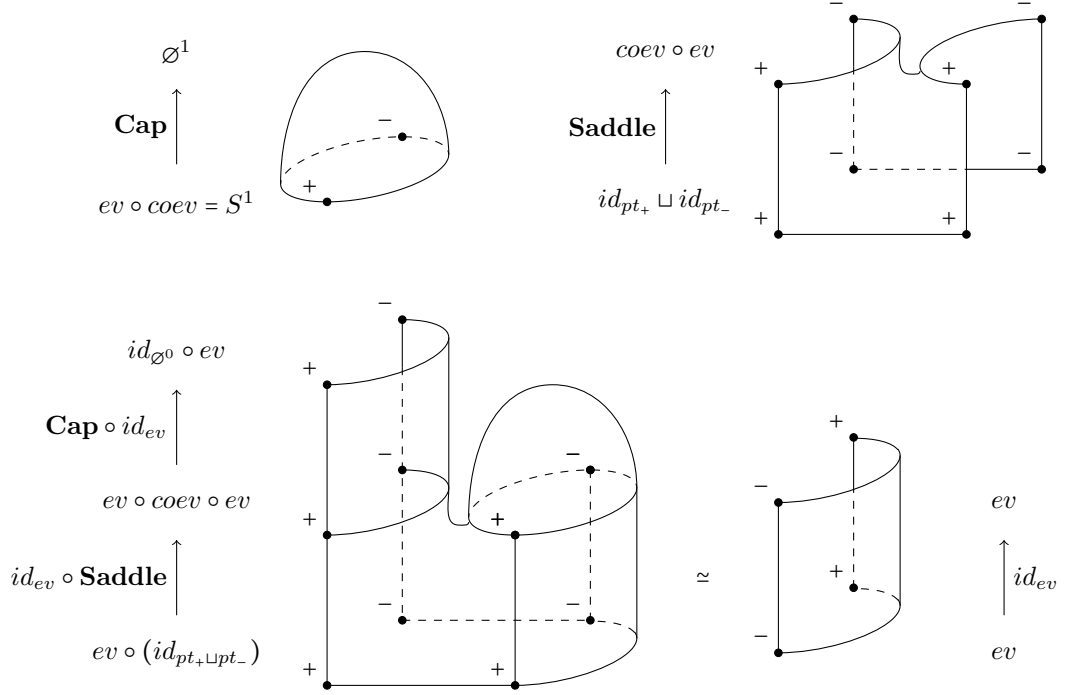

\end{proof}

It follows from Proposition \ref{propositionfullydualizable} that if $Z: \Bord^{Or}$ is an oriented TQFT, then the object $A= Z(pt_+) \in \Alg$ is fully dualizable, and thus is finite dimensional and semisimple. The vector space $Z(S^1)$ can be identified with the abelianization 
\[
\Ab(A) = A \otimes _{A \otimes A^{op}} A = A/[A,A].
\]
Moreover, as $coev$ is both left and right adjoint to $ev$ in $\Bord^{Or}$, there is an isomorphism of bimodules $A^\ast \simeq A$; this isomorphism can be encoded by a non-degenerate inner product on $A$, which factors as 
\[
A\otimes A \to A \to A/[A,A] \xrightarrow{t} \C,
\]
where $t$ is the linear map given by applying $Z$ to the disc cobordism $S^1 \to \emptyset^1$. Thus $A$ has the structure of a Frobenius algebra, and we have isomorphisms 
\[
Z(S^1) \simeq \Ab(A) \simeq \cZ(A)
\]
giving $Z(S^1)$ the structure of a commutative Frobenius algebra. In fact, more is true:

\begin{theorem}[The Cobordism Hypothesis]\cite{SP}\cite{Lur}
Given a finite dimensional, semisimple Frobenius algebra $A$, there is a unique oriented TQFT 
\[
Z_A: \Bord^{Or} \to \Alg
\]
with $Z(pt_+)=A$, and trace map $Z(\CAP): \Ab(A) \to \C$.
\end{theorem}

%

\subsection{1-dimensional spin cobordisms}\label{Spinfacts}
Just as in the oriented case, the $0$-manifold given by a single point has two non-equivalent spin structures. We shall denote these two objects of $\Bord^{Spin}$ by $pt_+$ and $pt_-$ as before. 

Note that there is a canonical involution $\alpha_M$ of every spin manifold $M$, which fixes each point of $M$ but switches the sheets of the spin structure. We can consider this as a spin cobordism with total space $M\times [0,1]$, by using $id_M$ to identify $\{0\}\times M$ with $M$, but $\alpha_M$ to identify $\{1\} \times M$ with $M$.

Thus we have 1-dimensional cobordisms:
\begin{itemize}
\item $id_{pt_\pm}: pt_{\pm} \to pt_{\pm}$,
\item $\alpha_{pt_\pm}: pt_\pm \to pt_\pm$.
\end{itemize}

Unlike in the oriented case, there is no canonical spin cobordism $pt_- \sqcup pt_+ \to \emptyset^0$ which is the evaluation morphism for a duality; rather, there are two choices which can be interchanged by precomposing with $\alpha_{pt_-} \sqcup id_{pt_+}$. However, we will arbitrarily pick one such cobordism:
\[
ev: pt_- \sqcup pt_+ \to \emptyset^0.
\]
Then the cobordism 
\[
coev: \emptyset^0 \to pt_+ \sqcup pt_-,
\]
is defined so that $(ev, coev)$ identify $pt_-$ as dual to $pt_+$ in $\Bord^{Spin}$. We will denote
\[
\overline{ev} := ev \circ (id_{pt_-} \sqcup \alpha_{pt_+}) \simeq ev \circ (\alpha_{pt_-} \sqcup id_{pt_+}).
\]
and 
\[
\overline{coev} := (id_{pt_+} \sqcup \alpha_{pt_-}) \circ coev  \simeq  (\alpha_{pt_+} \sqcup id_{pt_-}) \circ coev.
\]

We have identifications of closed 1-manifolds:
\[
ev \circ coev \simeq \overline{ev} \circ \overline{coev} \simeq S^1_{per}
\]
and
\[
ev \circ \overline{coev} \simeq \overline{ev} \circ {coev} \simeq S^1_{ap}.
\]
Figure \ref{figurespincobordism} illustrates the various one dimensional spin cobordisms and their interactions.

\begin{figure}[h]
\caption{One dimensional spin cobordisms. The spin structures are inherited from the immersion in the plane as illustrated.}\label{figurespincobordism}
\begin{tikzpicture}
\draw node [above] at (1,0.2) {$id_{pt_\pm}$};
\tqftidpm

\begin{scope}[yshift=-2cm]
\draw node [above] at (1,0.2) {$\alpha_{pt_\pm}$};
\tqftalphapm
\end{scope}

\begin{scope}[xshift = 4cm]
\draw node at (0,0.5) {$ev$};
\tqftev
\end{scope}
\begin{scope}[xshift=8cm]
\draw node at (0,.5) {$coev$};
\tqftcoev
\end{scope}

\begin{scope}[xshift=10cm]
\draw node at (1,.5) {$\sigma$};
\tqfts
\end{scope}

\begin{scope}[yshift = -4cm]
\draw node [above] at (1,0.2) {$\alpha_{pt_\pm}$};
\tqftalphapm
\begin{scope}[xshift=2cm]
\draw node [above] at (1,0.2) {$\alpha_{pt_\pm}$};
\tqftalphapm
\end{scope}

\draw node at (1,-1) {$\sidewayssimeq$};

\begin{scope}[yshift=-2cm]
\draw node [above] at (1,0.2) {$id_{pt_\pm}$};
\tqftidpm
\end{scope}

\begin{scope}[xshift=6cm]
\draw node at (0,0.5) {$\overline{ev}$};
\tqftevbar
\end{scope}

\begin{scope}[xshift=10cm]
\draw node at (0,.5) {$\overline{coev}$};
\tqftcoevbar
\end{scope}

\end{scope}

\begin{scope}[yshift=-8cm, xshift=1cm]
\node at (1,0.5) [above]{$ev \circ coev$};

\tqftcoev

\tqfts

\begin{scope}[xshift=2cm]
\tqftev
\end{scope}

\node at (3.5,-1) {$\simeq$};

\begin{scope}[xshift=5cm]
\node at (1,0.5) [above] {$\overline{ev} \circ \overline{coev}$};
\tqftcoevbar

\tqfts

\begin{scope}[xshift=2cm]
\tqftevbar
\end{scope}
\end{scope}

\node at (8.5,-1) {$\simeq$};

\begin{scope}[xshift=9cm]
\node at (1,.5) [above] {$S^1_{per}$};
\tqftsper
\end{scope}

\end{scope}

\begin{scope}[yshift=-12cm, xshift=1cm]
\node at (1,0.5) [above]{$\overline{ev} \circ coev$};

\tqftcoev
\tqfts
\begin{scope}[xshift=2cm]
\tqftevbar
\end{scope}

\node at (3.5,-1) {$\simeq$};

\begin{scope}[xshift = 5cm]
\node at (1,0.5) [above]{$ev \circ \overline{coev}$};

\tqftcoevbar
\tqfts
\begin{scope}[xshift=2cm]
\tqftev
\end{scope}
\end{scope}

\node at (8.5,-1) {$\simeq$};

\begin{scope}[xshift=10cm]
\node at (0,0.5) [above]{$S^1_{ap}$};

\draw (0,-1) circle [radius=1];
\end{scope} 

\end{scope}

\end{tikzpicture}
\end{figure}
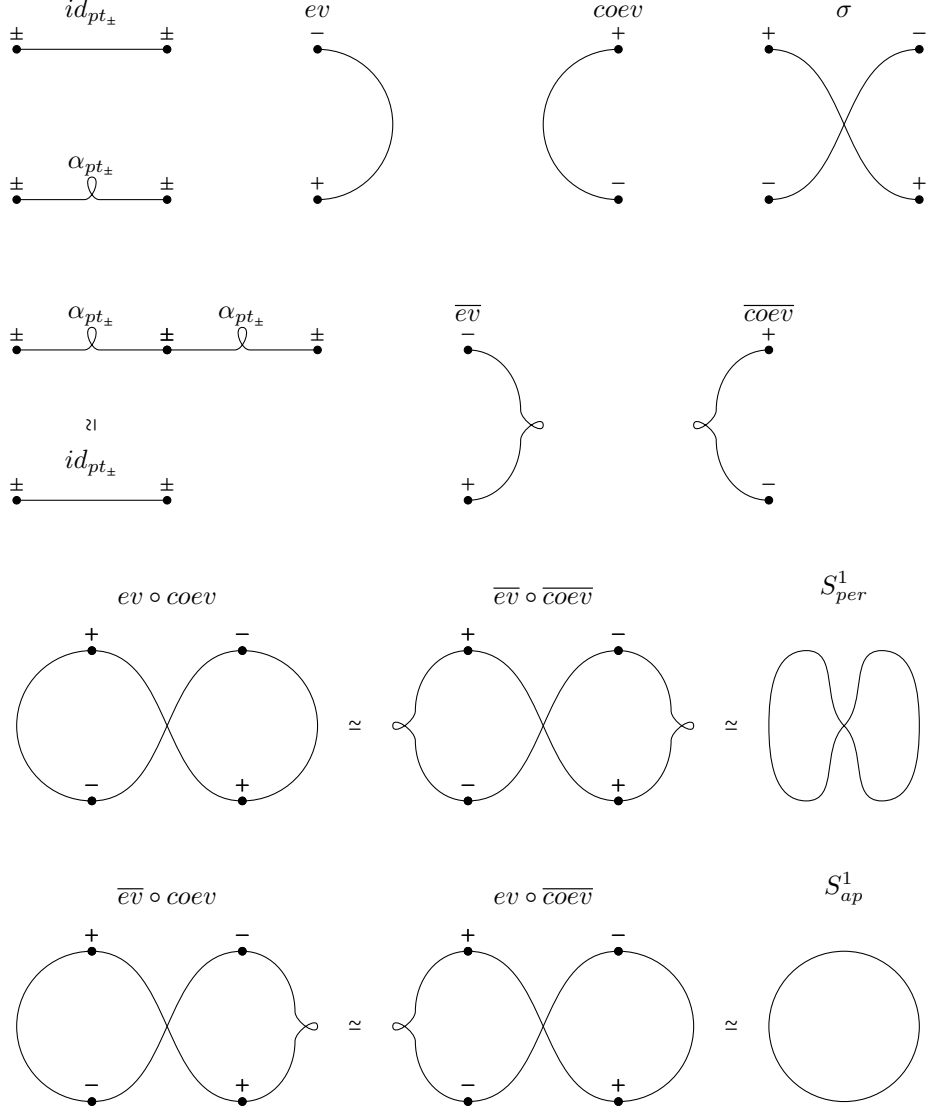

The following result can be seen in the same way as Proposition \ref{propositionfullydualizable}, noting that $ev \circ coev=S^1_{ap}$ is the boundary of a spin disc.
\begin{proposition}
The object $pt_+$ is fully dualizable in $\Bord^{Spin}$. The morphism $\overline{coev}$ is both left and right adjoint to $ev$.
\end{proposition}

\begin{proposition}\label{propositioncircle} 
Let $Z:\Bord^{Spin} \to S\Alg$ be a spin TQFT, and let $A=Z(pt_+)$. Then
\begin{itemize}
\item $Z(S^1_{per}) = \Ab(A)$,
\item $Z(S^1_{ap}) = \cZ(A)$.
\end{itemize}
\end{proposition}
\begin{proof}
Note that the Serre automorphism $\alpha_{pt_+}$ in $\Bord^{Spin}$ is equivalent to its inverse. This defines an isomorphism of bimodules $Z(\alpha_{pt_+}) \simeq A^! \simeq A^\ast$ in $S\Alg$ (see Example \ref{example2.8}).  We have
\begin{align*}
Z(S^1_{per}) = Z(ev \circ coev) = A \otimes _{A^e} A = \Ab(A),\\
Z(S^1_{ap}) = Z(ev \circ \overline{coev}) = A^! \otimes _{A^e} A = \cZ(A),
\end{align*}
as required.
\end{proof}

\subsection{$2$-dimensional spin cobordisms}

The 2-disc carries a unique spin structure with gives rise to cobordisms
\[
\CAP: S^1_{ap} \to \emptyset^1, 
\]
and
\[
\CUP: \emptyset^1 \to S^1_{ap} .
\]

Recall that the surface $S^2$ carries a unique (even) spin structure. Removing 3 discs from $S^2$, we obtain spin cobordisms:
 \[
_{ap,ap}\PANTS_{ap}: S^1 _{ap} \sqcup S^1 _{ap} \xrightarrow{} S^1 _{ap}
\]
 and 
\[
_{ap}\PANTS_{ap,ap}: S^1 _{ap} \sqcup S^1 _{ap} \xrightarrow{} S^1 _{ap}.
\]

We can now build higher genus cobordisms by gluing copies of $_{ap,ap}\PANTS_{ap}$, $_{ap}\PANTS_{ap,ap}$, $\CUP$, and $\CAP$. The punctured spin surfaces obtained in this way will always have even parity.


There are two non-isomorphic spin cobordisms:
\[
id_{S^1_{per}}, \alpha_{S^1_{per}}: S^1_{per} \to S^1_{per},
\]
(note that the involution $\alpha_{S^1_{ap}}$ is isotopic to the identity).
Now let us remove a disc from the cylinder $id_{S^1_{per}}$. If we consider the resulting boundary circle as incoming, we obtain a cobordism:
\[
_{per,ap}\PANTS_{per} : S^1 _{per} \sqcup S^1 _{ap} \xrightarrow{} S^1 _{per}.
\]
If we consider the resulting boundary circle as outgoing, we obtain a cobordism:
\[
_{per}\PANTS _{ap, per}S^1 _{per} \xrightarrow{} S^1 _{ap} \sqcup S^1 _{per} .
\]

Similarly, we can remove a disc from the cylinder $\alpha_{S^1_{per}}$ to obtain cobordisms $_{per,ap}\overline{\PANTS}_{per}$ and $_{per}\overline{\PANTS}_{per, ap}$. 

The two cylinders $id_{S^1_{per}}$ and $\alpha_{S^1_{per}}$ can also be considered as cobordisms
\[
ev_{S^1_{per}}, \overline{ev}_{S^1_{per}}: S^1_{per} \sqcup S^1_{per} \to \emptyset^1,
\]
and
\[
coev_{S^1_{per}}, \overline{coev}_{S^1_{per}}: \emptyset^1 \to S^1_{per} \sqcup S^1_{per}.
\]
As in Section \ref{Spinfacts}, neither one of $ev_{S^1_{per}}$ or $\overline{ev}_{S^1_{per}}$ is distinguished -- they are exchanged by precomposing with $\alpha_{S^1_{per}} \sqcup id_{S^1_{per}}$. An analogous statement holds for $coev_{S^1_{per}}$. By removing a disc from $ev_{S^1_{per}}$ we obtain a cobordism
\[
_{per,per}\PANTS_{ap}S^1 _{per} \sqcup S^1 _{per} \xrightarrow{} S^1 _{ap}.
\]
We define cobordisms $_{per,per}\overline{\PANTS} _{ap}$, $_{ap}\PANTS_{per,per}$, and $_{ap}\overline{\PANTS}_{per,per}$ similarly. The notation is chosen such that the pairs $(ev_{S^1_{per}}, coev_{S^1_{per}})$ and $(\overline{ev}_{S^1_{per}}, \overline{coev}_{S^1_{per}})$ both define evaluation and coevalution maps which exhibit $S^1 _{per}$ as its own dual. 

Let 
\[
E_{even}: S^1_{ap} \to S^1_{ap}
\]
denote the cobordism obtained by removing two discs from a surface of genus 1 with an even spin structure. Similarly, let $E_{odd}$ denote the analogous cobordism where the genus 1 surface has an odd spin structure. The following lemmas may easily be checked using (for example) Johnson's computation of the parity \cite{Jo}.

\begin{lemma}
We have:
\begin{align*}
_{per,per}\PANTS_{ap} \circ {_{ap}\PANTS}_{per,per} \simeq _{per,per}\overline{\PANTS}_{ap} \circ {_{ap}\overline{\PANTS}}_{per,per} \simeq E_{even} \\
_{per,per}\overline{\PANTS}_{ap} \circ {_{ap}\PANTS}_{per,per} \simeq _{per,per}\PANTS_{ap} \circ {_{ap}\overline{\PANTS}}_{per,per} \simeq E_{odd}
\end{align*}
\end{lemma}

\begin{lemma}
The composite
\[
\CAP \circ E_{even} ^{\circ g} \circ \CUP
\]
is an even spin surface of genus g. The composite
\[
\CAP \circ E_{odd} \circ E_{even} ^{\circ (g-1)} \circ \CUP,
\]
is an odd spin surface of genus g.
\end{lemma}

\subsection{Computing invariants of spin TQFT}

Let $Z: \Bord^{Spin} \to S\Alg$ be a spin TQFT and denote the superalgebra $Z(pt_+)$ by $A$. By Wedderburn theory (see Section \ref{Sergeev}), we may assume that $A$ takes the form:
\[
A = \prod _{S_0} \C \times \prod _{S_1} \Cl_1,
\]
where $S= S_0 \sqcup S_1$ is the set of simple supermodules of $A$ decomposed into type $M$ and type $Q$ parts. Write $\deg (s)=i$ whenever $s\in S_i$. Combining Proposition \ref{propositioncircle} with the results of Section \ref{Sergeev}, we obtain:

\begin{proposition}
A spin TQFT $Z$ as above assigns the following invariants to a spin circle:
\begin{itemize}
\item $Z(S^1_{ap}) = \cZ (A) \cong \C^{|S|}$ which has a basis of orthogonal idempotents $e_s$, (all in degree $0$).
\item $Z(S^1_{per}) = \Ab(A) \cong \C ^{|S_0|} \oplus \C^{|S_1|}[1]$.
\end{itemize}
\end{proposition}

Consider the linear map
\[
t:= Z(\CAP): Z(S^1 _{ap}) \to \C,
\]
and write $t(s)= t(e_s)\in \C$ for each $s\in S$. The goal of this section is to derive a formula for the value of $Z$ on surfaces in terms of the data of $S=S_0 \sqcup S_1$ and the numbers $t(s)$.

\begin{proposition}\label{inv}
Let $\Sigma$ be a closed spin surface with $k$ marked points considered as a cobordism $\left( S^1 _{ap}\right)^{\sqcup k} \to \emptyset^1$. Then we have
\[
Z(\Sigma): e_{s_1} \otimes \ldots \otimes e_{s_k} \mapsto 
\begin{cases}(-1)^{\deg(s_1) p(\Sigma)} t(s_1)^{\chi(\Sigma)/2} \quad \text{if $s_1 = \ldots = s_k$} \\ $0$ \quad \text{otherwise.} \end{cases}
\]
In particular, $Z$ assigns $\sum _{s\in S} (-1)^{\deg(s)p(\Sigma)}t(s)^{\chi(\Sigma)/2}$ to a closed spin surface. 
\end{proposition}

\begin{proof} 
The result for even surfaces follows from the usual techniques of oriented 2d TQFT. Explicitly, the  map $Z(_{ap,ap}\PANTS_{ap})$ recovers the multiplication map on $Z(S^1_{ap}) = \cZ(A)$. The cobordism $\CAP$ defines a non-degenerate trace, giving $\cZ(A)$ the structure of a commutative Frobenius algebra (in degree 0). It is an easy exercise in the theory of semisimple Frobenius algebras that the comultiplication map $Z(_{ap}\PANTS_{ap,ap})$ is given by $e_s \mapsto t(s)^{-1} e_s \otimes e_s$. It follows that 
\[
Z(E_{even}) = Z(_{ap,ap}\PANTS_{ap}) \circ _{ap}\PANTS_{ap,ap}: e_s \mapsto t(s)^{-1} e_s.
\]
The even spin surface of genus $g$ with $k$ punctures can be factorized as $\CAP \circ E_{even} ^g \circ _{ap^k}\PANTS_{ap}$, where $_{ap^k}\PANTS_{ap}$ is the pair of pants with $k$ incoming boundary circles. This gives the result in the even case.

In order to complete the computation for odd surfaces, it remains to compute $Z(E_{odd})$. First we observe the following:

\begin{lemma}\label{lemmagrading}
The linear map $Z(\alpha_{S^1_{per}}): \Ab(A) \to \Ab(A)$ is given by the grading involution.
\end{lemma}
\begin{proof}
It is sufficient to prove the lemma in the cases $A= \C$ and $A= \Cl_1$. Note that the cylinder $\alpha_{S^1_{per}}$ can be thought of as a product $S^1_{per} \times \alpha_{pt_+}$. Thus the map $Z(\alpha_{S^1_{per}})$ is given by the automorphism of $\Ab(A)$ induced from the $A-A$ bimodule $Z(\alpha_{pt_+})$ (in general, any Morita equivalence of algebras induces a trace map on abelianizations). This bimodule is necessarily the Serre automorphism of $A$. In the case $A=\C$, the Serre automorphism and grading involution are trivial. In the case $A=\Cl_1$, the Serre automorphism is the Shift bimodule $A[1]$; the trace of the shift bimodule is precisely the grading automorphism, as required.
\end{proof}
\begin{remark}
Often it is included in the axioms of spin TQFT that the canonical spin automorphism of a 1-manifold gives rise to the grading automorphism on the corresponding super vector space. For us this follows from the fact that the spin automorphism of a point is also the Serre automorphism in $\Bord ^{spin}$ as shown in Lemma \ref{lemmagrading}.
\end{remark}

The map 
\[
Z(_{per, ap}\PANTS_{per}): Z(S^1_{ap}) \otimes Z(S^1_{per}) \to Z(S^1_{per})
\]
recovers the action canonical action of $Z(A)$ on $\Ab(A)$. The elements $e_s \in Z(A)$ give a family of orthogonal idempotent operators on $\Ab(A)$. The eigenspaces $I_s$ of the operators $e_s$ are one dimensional and generate $\Ab(A)$. 

The super vector space $\Ab(A)$ can be equipped with the inner product given by$Z(ev_{S^1_{per}})$. The eigenspaces $I_s$ are mutually orthogonal with respect to this inner product, and we can choose a basis $f_s$ of $\Ab(A)$ which is orthonormal with respect to the inner product (such a basis is only canonical up to sign, as we could have picked the inner product $Z(\overline{ev}_{S^1_{per}})$. Note that $\deg(f_s) = \deg(s)$.

Now, by construction,
\[
Z(_{per,per}\PANTS_{ap}): f_{s_1} \otimes f_{s_2} \mapsto \begin{cases}
e_{s} \quad \text{if $s_1 = s_2 = s$},\\
0 \quad \text{if $s_1 \neq s_2$}.
\end{cases}
\]
and
\[
Z(_{ap}\PANTS_{per,per}): e_s \mapsto f_s \otimes f_s.
\]

As $_{per,per}\overline{\PANTS}_{ap}$ is given by the composition $_{per,per}{\PANTS}_{ap} \circ \left( id_{S^1_{per}} \sqcup \alpha_{S^1_{per}} \right)$, we also have 
\[
_{per,per}\overline{\PANTS}_{ap}: f_s \otimes f_s \mapsto (-1)^{\deg(s)} e_s.
\]
Finally, $E_{odd}$ is given by the composition 
\[
Z(_{per,per}\overline{\PANTS_{ap}}) \circ Z(_{ap}\PANTS_{per,per}), 
\]
and thus $Z(E_{odd})$ maps $e_s$ to $(-1)^{\deg(s)} e_s$. This completes the proof of Proposition \ref{inv}.
 
\end{proof}

\subsection{Classification of 2d spin TQFTs}
Proposition \ref{inv} computes the invariants of a TQFT $Z$ such that $Z(pt)$ is a finite dimensional semisimple algebra $A$, starting from the linear map $t: \cZ (A) \to \C$ given by applying $Z$ to a spin disc. In fact, the cobordism hypothesis \cite{Lur} implies that any TQFT is completely determined by such data, and that, given the data of a semisimple algebra $A$ and a map $t$ satisfying certain conditions, there is a TQFT which assigns those data to a point and a disc. \footnote{A classification of Spin (2,1) TQFTs (i.e. non-extended TQFTs) was described in \cite{DBMS}.}

\begin{proposition}\label{class}
A 2d spin TQFT is given by a semisimple superalgebra $A$, together with a linear map $t: \cZ(A) \to \C$ such that the composite
\[
A^! \otimes A \to A^! \otimes _{A^e} A = \cZ(A) \xrightarrow{t} \C
\]
identifies $A^!$ with the linear dual of $A$. This is equivalent to the following data: a finite set $S = S_0 \sqcup S_1$ (the spectrum of $\cZ (A)$ decomposed into type $M$ and $Q$), together with a function $t: S = S_0 \sqcup S_1 \to \C^\times$ (the trace map on $\cZ(A)$ restricted to the orthogonal idempotents). 
\end{proposition}
\begin{proof}[Proof (sketch)]
According to the cobordism hypothesis, a spin TQFT should be given by a fully dualizable object of $S\Alg$, equipped with the structure of a $Spin(2)$ homotopy fixed point. The fully dualizable object determines a \emph{framed} theory, and the fixed point data allows an extension to spin manifolds.

An algebra $A$ is fully dualizable if and only if it is finite dimensional and semisimple (see Example \ref{example2.8}), so by Wedderburn theory $A$ is Morita equivalent to a product $\prod _{S_0} \C \times \prod _{S_1} \Cl_1$.

The action of $SO(2)$ on the space of fully dualizable objects gives an automorphism of each object (the Serre automorphism), which in the case of algebras is the bimodule dual $A^!$ of $A$ (or it's inverse $A^\ast$). Similarly, the action of $Spin(2)$ on this space gives the bimodule $A^! \otimes _A A^!$.

Let us first identify the space of $SO(2)$ fixed points (corresponding to oriented TQFTs). Giving the structure of a homotopy fixed point for $SO(2)$ is to give an isomorphism  of bimodules $A \cong A^!$. Unwinding the definitions of dual bimodule give that this is the same as a linear map $\Ab(A) = A \otimes _{A\otimes A^{op}} A \to \C$ such that the composite
\[
A\otimes A \xrightarrow{m} A \otimes _{A^e} A \xrightarrow{tr} \C
\]
identifies $A$ with its linear dual $A^\ast$. This is the structure of a symmetric Frobenius algebra on $A$ \cite{Lur}, \cite{SP}. The centre and abelianization of $A$ are both identified with the vector space with basis $e_s$, $s\in S$. A map $tr$ as above is exactly given by a function $t:S \to \C ^\ast$.

Now, to give a $Spin(2)$ fixed point is to give a bimodule isomorphism $A^! \cong A^\ast$. Again, unwinding the dualities gives us a map $\cZ(A) = A^! \otimes _{A^e} A \to \C$, such that the composite $A^! \otimes A \to A^! \otimes _{A^e} A \to \C$ identifies $A^!$ with the linear dual of $A$. This is equivalent to a function $S \to \C ^\ast$ as required.
\end{proof}

\begin{remark}
If the 2-category $S\Alg$ was replaced by an $(\infty ,2)$ category with non-trivial higher morphisms (for example, the category of differential graded algebra, bimodules, maps of bimodules, homotopies between maps etc...), there would be higher coherence data to consider. In particular, specifying a $SO(2)$ or $Spin(2)$ (homotopy) fixed point would involve more data.
\end{remark}

%

\begin{remark}
If $A$ contains only factors of type $M$ (i.e. $S_1 = \emptyset$), then the corresponding TQFT cannot distinguish spin structures. It follows that a spin TQFT valued in ordinary (ungraded) algebras necessarily comes from an oriented theory. 
\end{remark}

\begin{example}[The Parity TQFT]\label{AtTh}
Consider the spin TQFT $Z_1$ for which $S_0 = \emptyset$ and $S_1 = \{\ast \}$ and $t(\ast )=1$. This assigns $\Cl_1$ to a point, the vector space $\C$ to $S^1 _{per}$, $\C[1]$ to $S^1 _{per}$, and the number $(-1)^{p(\Sigma)}$ to every spin surface $\Sigma$. A direct construction of $Z_1$ is given in Section \ref{AtiyahTheory}.
\end{example}

\section{Averaging TQFTs over finite covers}\label{average}  

The aim of this section is to give a procedure for taking a TQFT $Z_1$ and producing a new TQFT $Z_n$ for each positive integer $n$ which averages $Z_1$ over $n$-fold covers. This procedure is based on the machinery of finite path integrals as defined in \cite{FHLT} (based on ideas from \cite{Freed}). See also the recent paper \cite{Morton} \footnote{See also the entry for \cite{FHLT} at ncatlab.org, written by Urs Schreiber, as well as \cite{Lur}.} 

For example, applying this procedure to the trivial TQFT recovers the theory which assigns the ordinary Hurwitz numbers to a closed 2-manifold. Applying this to the parity theory (see Example \ref{AtTh}) will produce the spin Hurwitz theory of Theorem \ref{TFT}.


\begin{notation} 
In this section, we will write $\Bord$ instead of $\Bord^{Spin}$, $\Alg$ instead of $S\Alg$ and $\Vect$ instead of $S\Vect$. The results of this section apply much more generally. For example $\Bord$ could be the oriented or framed bordism category (or $(\infty ,2)$-category), and $\Alg$ could be the Morita category of ordinary algebras, or the $(\infty ,2)$-category of dg-algebras.

If $\cC$ is a symmetric monoidal 2-category, write $\Omega \cC$ for the symmetric monoidal 1-category of endomorphisms of the identity object $1_\cC$. Similarly, write $\Omega ^2 \cC$ for the monoid of endomorphisms of $1_{\Omega \cC}$. We call objects of $\cC$ $0$-objects, objects of $\Omega \cC$ $1$-objects and elements of $\Omega ^2 \cC$ $2$-objects. For example, if $\cC = \Bord$ then $n$-objects are closed $n$-manifolds.
\end{notation}

\subsection{The averaging theory $Z_n$}.
Let $Z_1: \Bord \to \Alg$ be a TQFT. The goal of this section is to prove the following
\begin{proposition}\label{CoveringTFT}
Given $Z_1$ as above, there is a family of TQFTs $Z_n: \Bord \to \Alg$ for $n=2,3, \ldots$, such that
\begin{itemize}
\item $Z_n (pt) = Z_1(pt)^{\otimes n} \rtimes S_n$.
\item If $N$ is a closed 1-manifold, $Z_n(N) = \bigoplus Z_1(\widetilde{N})^{\Aut(\widetilde{N}/N)}$, where the sum is over isomorphism classes of $n$-fold covers $\widetilde{N}$ of $N$.
\item If $\Sigma$ is a cobordism between closed 1-manifolds $N_0$ and $N_1$, $Z_n (\Sigma)$ is given by the following linear map: 
\begin{align*}
\bigoplus  Z_1(\widetilde{N_0})^{\Aut(\widetilde{N_0}/N_0)} &\xrightarrow{} \ \bigoplus Z_1(\widetilde{N_1})^{\Aut(\widetilde{N_1}/N_1)} \\
u = \left( u_{\widetilde{N_0}} \right) &\mapsto v= \left( v_{\widetilde{N_1}} \right)
\end{align*}
where 
\[
v_{\widetilde{N_1}} = \sum \frac{ Z_1(\widetilde{\Sigma})(u)}{\# \Aut(\widetilde{\Sigma} / \Sigma)}
\]
and the sum is over isomorphism classes of covers $\widetilde{\Sigma} / \Sigma$.
\end{itemize}
\end{proposition}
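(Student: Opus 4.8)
The plan is to realize $Z_n$ as a \emph{finite path integral} of $Z_1$ over the classifying space $BS_n$, following \cite{FHLT}. First I would record the standard fact that the groupoid of $n$-fold covers of a manifold $M$ is naturally equivalent to the mapping groupoid $\mathrm{Map}(M,BS_n)$, where $BS_n$ is viewed, after $2$-truncation, as the one-object groupoid $\mathbf{B}S_n$. Accordingly there is a symmetric monoidal bordism $2$-category $\Bord(BS_n)$ of spin manifolds equipped with a map to $BS_n$ — equivalently, with an $n$-fold cover — which comes with two symmetric monoidal functors: the obvious forgetful functor to $\Bord$, and the \emph{transfer} functor $T\colon \Bord(BS_n)\to \Bord$ sending a manifold-with-cover to the total space of the cover (with its pulled-back spin structure). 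The functor $T$ is monoidal because forming total spaces of covers commutes with disjoint unions and with gluing of cobordisms. Composing, I set $\widetilde{Z_1} := Z_1\circ T\colon \Bord(BS_n)\to \Alg$.

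Next I would invoke the finite path integral (sum over fibres) construction: integrating out the $BS_n$-decoration produces a functor $Z_n := \int_{BS_n}\widetilde{Z_1}\colon \Bord\to \Alg$. Concretely, for an object or morphism $M$ of $\Bord$ one forms the finite groupoid $X^M := \mathrm{Map}(M,BS_n)$ of covers of $M$ and sets $Z_n(M) := \colim_{X^M}\widetilde{Z_1}|_{X^M}$, while for a cobordism $\Sigma\colon N_0\to N_1$ one uses the span of restriction maps $X^{N_0}\leftarrow X^\Sigma\to X^{N_1}$ to define $Z_n(\Sigma)$ as a pull--push: pull the input back along $X^\Sigma\to X^{N_0}$, apply $\widetilde{Z_1}(\Sigma)$ fibrewise, and push forward (sum) along $X^\Sigma\to X^{N_1}$. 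That this is well defined and symmetric monoidal rests on two inputs, which I would verify or cite from \cite{FHLT} \cite{Lur}: (i) \emph{ambidexterity} — the target $\Alg$ (with $\Omega\Alg=\Vect$, over a field of characteristic $0$) admits the relevant finite homotopy limits and colimits over $\pi$-finite diagrams and they agree, so "sum over fibres" makes sense; and (ii) the base-change identity $X^{\Sigma'\circ\Sigma}\simeq X^{\Sigma'}\times_{X^N}X^\Sigma$ together with a Fubini/Beck--Chevalley statement for groupoid integration, which gives functoriality under gluing. Monoidality is then immediate from $X^{M\sqcup M'}\simeq X^M\times X^{M'}$.

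It then remains to unwind the three formulas. For a point, $X^{pt}=\mathbf{B}S_n$ and $\widetilde{Z_1}(pt)=Z_1(pt)^{\otimes n}$ with $S_n$ permuting the tensor factors; the colimit over $\mathbf{B}S_n$ of a finite group acting on an object of $\Alg$ by algebra automorphisms is the crossed product, giving $Z_n(pt)=Z_1(pt)^{\otimes n}\rtimes S_n$. For a closed $1$-manifold $N$, one has $X^N\simeq\bigsqcup_{\widetilde N}\mathbf{B}\Aut(\widetilde N/N)$, and the colimit in $\Vect$ of $\Aut(\widetilde N/N)$ acting on $Z_1(\widetilde N)$ is the space of coinvariants, which in characteristic $0$ is identified via the averaging idempotent with the invariants $Z_1(\widetilde N)^{\Aut(\widetilde N/N)}$; the direct sum over isomorphism classes of covers gives the stated $Z_n(N)$. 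Finally, for a cobordism $\Sigma$ the pull--push, expressed in the invariant-theoretic representatives just chosen, is exactly a sum over isomorphism classes of covers $\widetilde\Sigma$ of the maps $Z_1(\widetilde\Sigma)$ applied to the component of the input labelled by $\partial_-\widetilde\Sigma$, each weighted by the groupoid cardinality $1/\#\Aut(\widetilde\Sigma/\Sigma)$ of the fibre of $X^\Sigma\to X^{N_1}$ — the displayed formula.

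I expect the main obstacle to be steps (i)--(ii): making the finite path integral rigorous in this $2$-truncated setting, i.e. checking that $\Alg$ is sufficiently "$2$-finitely cocomplete" and ambidextrous, and that pull--push along mapping-space spans of $\pi$-finite groupoids composes correctly. Once that infrastructure is in place, the identifications of $Z_n(pt)$, $Z_n(N)$ and $Z_n(\Sigma)$ are essentially bookkeeping with crossed products and groupoid cardinalities, the only genuinely used arithmetic fact being that invariants and coinvariants coincide over a field of characteristic $0$.
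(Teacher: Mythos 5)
Your proposal is correct and follows essentially the same route as the paper: the paper also defines $Z_n$ as the FHLT finite path integral, namely the composite $\Bord \xrightarrow{\Cov_n} \Fam_2(\Bord) \xrightarrow{\Fam_2(Z_1)} \Fam_2(\Alg) \xrightarrow{\Sum_2} \Alg$, where $\Cov_n(M)$ is exactly your mapping groupoid $\mathrm{Map}(M,BS_n)$ with the total-space functor, and $\Sum_2$ is your pull--push, relying on the same two inputs (invariants agree with coinvariants in characteristic $0$, and base change for cartesian squares of groupoids). The paper likewise defers the full $2$-morphism-level verification to \cite{FHLT} and reads off the three bullet points exactly as you do.
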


The field theory $Z_n$ will be defined as a composite
\begin{equation*}\label{composite}
\Bord  \xrightarrow{\Cov _n} \Fam _2 (\Bord ) \xrightarrow{\Fam _2(Z)} \Fam _2 (\Alg ) \xrightarrow{\Sum_2} \Alg.
\end{equation*}

Very loosely, the map $\Cov _n$ takes a manifold to its groupoid of $n$-fold covers (whilst remembering the spin structure on the total space of each cover), the functor $\Fam _2(Z)$ applies the TQFT $Z_1$ to the total space of the cover, and then $\Sum _2$ takes the average. Most of the categories and functors appearing above are defined in section 3 of \cite{FHLT} (see also \cite{Lur}). Here, we will briefly review the definitions (note that we only need the case of $2$-categories, rather than the general $m$-categories discussed in \cite{FHLT}). 

\subsection{Toy example: 1d TQFTs}
Before we delve into the details of the construction, let us examine what it gives us in the case of a 1d TQFT. Recall that a 1d (say, oriented) TFT $Z$ valued in vector spaces is determined by a finite dimensional vector space $V$ which is the value at a (positively oriented) point. The linear maps $\C \to V \otimes V^\ast$ and $V^\ast \otimes V \to \C$ which $Z$ assigns to semicircles are necessarily the unit and trace maps which identify $V^\ast$ as the dual of $V$. Hence the value of $Z$ on a circle is given by the integer $d := \dim V = tr(1_V)$. 

The analogue of Proposition \ref{CoveringTFT} says that we can define a new TFT $Z_n$ whose value at a point is $( V ^{\otimes n})^{S_n}$ and whose value on a circle is $\sum _{\bmu } \frac{d^{\ell (\bmu)}}{\left| C(\bmu) \right| }$. Here, the sum is taken over partitions $\mu$ which index $\pi _0 (\Cov (S^1))$, and $C_(\bmu)$ is the centralizer of the corresponding conjugacy class in $S_n$. 

By comparing the results from Propositions \ref{CoveringTFT} with what we know by duality, we recover the formula: 
\[
\dim ( V ^{\otimes n})^{S_n} =  \sum _{\bmu } \frac{d^{\ell (\bmu)}}{\# C(\bmu) }.
\]
The formulas in Theorem \ref{Hurwitz} will be proved using a very similar idea.

\subsection{The category $\Fam _2(\cC)$} 
Suppose $\cC$ is a symmetric monoidal 2-category. The symmetric monoidal 2-category $\Fam _2(\cC)$ has objects essentially finite groupoids $X$ (i.e. $\pi_0(X)$ and $\pi _1 (X)$ are finite) with a functor $f: X \to \cC$. The 1-morphisms are spans of groupoids over $\cC$, i.e. a groupoid $W$, with maps $p_1$, $p_2$ to $X$ and $Y$ and a natural transformation $\alpha : f_1 \circ p_1 \Rightarrow f_2 \circ p_2$:
\begin{equation}\label{span}
\xymatrix
{
&  W \ar[ld]_{p_1} \ar[rd]^{p_2} &\\
 X \ar[rd]_{f_1} \ar@{=}[r] & \alpha \ar@{=>}[r] &  Y \ar[ld]^{f_2} \\
& \cC &
}
\end{equation}

Composition of morphisms is given by the homotopy fibre product of groupoids. Similarly, the 2-morphisms are given by spans of such diagrams (up to equivalence), and the symmetric monoidal structure is given by products of groupoids. The details of how 2-morphisms compose etc. quickly become quite complicated and we will not attempt to flesh them out here.We refer the reader to \cite{Morton} for more details.
\begin{remark}\label{2vect}
Note that the $2$-category of \emph{$2$-vector spaces} defined in \cite{Morton} is equivalent to the full subcategory of $\Alg$ given by finite dimensional semisimple algebras: to such an algebra $A$, one assigns its category of finite dimensional modules which is a $2$-vector space.
\end{remark}

However, the category $\Omega\Fam _2 (\cC) = \Fam _1 (\Omega \cC)$ is easier to describe: it's objects are essentially finite groupoids $X$ with a map $f:X \to \Omega \cC$. Morphisms are spans of groupoids as in the diagram \ref{span}, except that $\cC$ is replaced by $\Omega \cC$. Similarly, $\Omega ^2 \Fam _2 (\cC) = \Fam _0 (\Omega ^2 \cC)$ is the monoid of equivalence classes of groupoids $X$ with a map $\pi _0 (X) \to \Omega ^2 \cC$. 

Note that if we have a symmetric monoidal functor of 2-categories $F:\cC \to \cD$, then we get a symmetric monoidal functor $\Fam_2(F): \Fam _2 (\cC) \to \Fam _2 (\cD)$ by composing all the maps with $F$.

\subsection{The functor $\Cov _n$}
If $M$ is a topological space, let $\Cov _n(M)$ denote the groupoid whose objects are $n$-fold covers of $M$, and morphisms are deck transformations. The functor $\Cov _n : \Bord  \to \Fam _2 (\Bord )$ takes a closed manifold $M$ to $\Cov _n (M)$ equipped with the map $\Cov _n (M) \to \Bord $ which takes a covering space to the its total space thought of as an object of $\Bord$. To a cobordism $M_0 \xrightarrow{N} M_1$ between closed manifolds, we associate the span
\[
\Cov _n (M_0) \longleftarrow \Cov _n (N) \longrightarrow \Cov _n (M_1)
\]
equipped with the maps $\Cov _n (M_i) \to \Bord $. 

\subsection{The functor $\Sum _2$}\label{Sum2}
Let us first describe what $\Sum _2 : \Fam _2 (\Alg) \to \Alg$ does on $i$-objects, $i=0,1,2$:
\begin{itemize}
\item To a 2-object $(X, f: \pi _0 (X) \to \C)$ of $\Fam _2 (\Alg)$, $\Sum _2$ assigns the number
\[
\sum _{x \in \pi _0 (X)} \frac{f(x)}{\#\Aut (x) }.
\]
\item To a 1-object $(X, f:X \to \Vect)$ of $\Fam _2 (\Alg)$, $\Sum _2$ assigns the limit of the diagram $f$ which can be identified with
\[
\bigoplus _{x \in \pi _0 (X)} f(x)^{\Aut (x)}.
\]
\item To a 0-object $(X, f:X \to \Alg)$, $\Sum _2$ assigns the (homotopy) limit of the diagram $f$ which can be identified with\footnote{The computation of this limit is given in \cite{Morton} in the context of $2$-Vector spaces (see Remark \ref{2vect}.} 
\[
\bigoplus _{x\in \pi _0 (X) } f(x) \rtimes \Aut (x).
\]
\end{itemize}

We would like to describe how $\Sum _2$ acts on more general morphisms in $\Fam _2 (\Alg)$. This will be given by an integral transform formula as follows, which we will describe in the case of a morphism in $\Omega \Fam _2 (\Alg) = \Fam _1 (\Vect)$.

%

First let us note that if $(X, f:X \to \Vect)$ is a 1-object, then the  natural map 
\[
\lim f = \bigoplus _{x\in \pi _0}  f(x) ^{\Aut (x)} \too \bigoplus _{x\in \pi _0}  f(x) _{\Aut (x)} = \colim f
\]
is an isomorphism (where the subscript denote coinvariants). The inverse is given by lifting $v \in f(x)_{\Aut (x)}$ to $\widetilde{v} \in f(x)$ and then mapping to 
\[
\sum _{g \in \Aut (x)} g.\widetilde{v} \in f(x) ^{\Aut (x)}.
\]

Now, given a sequence $W \stackrel{p}{\to} X \stackrel{f}{\to} \Vect$, as well as the natural pullback map $f^\ast : \lim (f) \to \lim (fp)$ we have the pushforward $f_\ast : \lim (fp) \to \lim (f)$, defined by first identifying limits with colimits, then using the natural pushforward map for colimits. 

Recall that a morphism of 1-objects is given by a span of groupoids as in diagram \ref{span} (where $\cC = \Vect$). In particular, for each $w\in \pi _0(W)$, we have a linear map $\alpha (w): f_1 (p_1(w)) \to f_2 (p_2 (w))$ which is $\Aut (w)$ equivariant. Then $\Sum _2 (W,p_1 ,p_2, \alpha)$ is given by the integral transform formula:
\[
\lim p_1 \xrightarrow{p_1 ^\ast} \lim f_1 p_1 \xrightarrow{\alpha} \lim f_2 p_2 \xrightarrow{p_{1\ast}} \lim p_2 .
\]

Explicitly, this is given by the linear map 
\[
\bigoplus _{x\in \pi _0 X}  f_1(x) ^{\Aut {x}} \xrightarrow{} \bigoplus _{y\in \pi _0 Y}  f_2(y) ^{\Aut (y)}
\]
where $u = (u_x)$ gets mapped to $v=(v_y)$ and
\begin{align*}\label{inttrans}
v_y = \sum _{w\in \pi _0 W_y} \frac{\alpha (w)(u)}{|\Aut (w)|}.
\end{align*}

The fact that this construction is functorial follows from a base change formula for the pullback and pushforward: $p_2 ^\ast p_{1\ast} \cong \widetilde{p}_{1 \ast} \widetilde{p}_2 ^\ast$ whenever $p_1, p_2, \widetilde{p}_1 , \widetilde {p}_2$ form a cartesian square.

The same integral transform idea allows us to define $\Sum _2$ on 1-morphisms; the description for general 2-morphisms is somewhat more complicated (again, see \cite{Morton} for a careful construction of the functor $\Sum_2$).

Proposition \ref{CoveringTFT} now follows from the desription of the functor $\Sum_2$ in Section \ref{Sum2}.

%

\section{Spin Hurwitz Numbers}\label{finalsection}
In this section we will combine the results from Sections \ref{SpinTFT} and \ref{average} to construct the spin Hurwitz theory of Theorem \ref{TFT} and prove the spin Hurwitz formula of Theorem \ref{Hurwitz}.

\subsection{The spin Hurwitz theory}\label{proofoftft}
The TQFT $Z_n$ is obtained by applying Proposition \ref{CoveringTFT} in the case when $Z_1$ is the parity theory (see Example \ref{AtTh}). It is clear from the proposition that the value of $Z_n$ on a closed spin surface $\Sigma$ is the spin Hurwitz number $\cH_n(\Sigma)$. 

To complete the proof of Theorem \ref{TFT} we should also identify $Z_n(S^1_{ap})$ with $\C[\OP(n)]$. Here we want more than just an identification as abstract vector spaces, rather, we want to identify a particular basis $\delta_\bmu$, $\bmu \in \OP(n)$ such that 
\begin{equation}\label{delta}
\cH_n(\Sigma, \bmu^1, \ldots, \bmu^k) = Z_n(\Sigma^\ast)(\delta_{\bmu^1} \otimes \ldots \otimes \delta_{\bmu^k}),
\end{equation}
where $\Sigma^\ast$ is the cobordism from $(S^1_{ap})^{\sqcup k}$ to $\emptyset^1$ obtained by puncturing $\Sigma$ at $k$ points.

Let $\bmu \in \PP(n)$ be a partition corresponding to a covering $\bigsqcup _i \cS_{\mu _i} \to \cS_{ap}^1$ (so each $\cS_{\mu _i} \to S^1$ is a connected cover of degree $\mu _i$); the automorphism group of this covering is the centralizer $C(\bmu)$ in $S_n$. 

If one of the $\mu _i$ is even, then the corresponding component of the cover will be periodic. There is a unique non-trivial order 2 automorphism of the cover, fixing all the components except $\cS_{\mu _i}$ on which it acts by switching the sheets of the spin structure. Thus it acts by $-1$ on the odd vector space $Z_1(\cS_{\mu_i})$, so that the invariants of the action of $C(\bmu)$ on $Z_1(\cS_\bmu)$ vanish. On the other hand, if $\bmu$ is an odd partition, then the action on $Z_1(\cS_\bmu)$ is trivial. Thus 
\begin{align}
Z_n(\cS_{ap}^1) = \bigoplus _{\bmu \in \OP(n)} Z_1(\cS_{\mu_1}) \otimes \ldots \otimes Z_1(\cS_{\mu_{\ell(\bmu)}}).
\end{align}
As $\cS_{\mu_i} \simeq S^1_{ap}$, there is spin disc bounding $\cS_{\mu_i}$, and thus $Z_1(\cS_{\mu_i})$ contains a canonical element, $1_{\mu_i}$. The element $\delta_\bmu$ is defined to be
\[
1_{\mu_1} \otimes \ldots \otimes 1_{\mu_{\ell(\bmu)}} \in Z_1(\cS_{\mu_1}) \otimes \ldots \otimes Z_1(\cS_{\mu_{\ell(\bmu)}}) = Z_1(\cS_{\bmu}) \subseteq Z_n(S^1_{ap}).
\]
The property in Equation \ref{delta} follows from Proposition \ref{CoveringTFT}. 

\begin{remark}
The statement that only odd partitions appear as boundary conditions in out TQFT can be thought of more concretely. Suppose you have a decomposition of a spin surface $\Sigma$ into two halves along a boundary circle. We wish to construct covers of the surface by gluing together covers along the two halves. If we have a cover on one of the halves with non-odd ramification data at the boundary, one of the components of the boundary of the cover will be periodic. There are always two ways to glue this circle to a corresponding one over the other half, which lead to spin surfaces covering our original surface \emph{which have opposite parity}. Thus, when we sum over all covers, these will cancel and will not contribute to the sum.
\end{remark} 

Now let us investigate $Z_n(S^1_{per})$ (this will not be necessary for the spin Hurwitz formulas). In this case, all components of the covering $S_\bmu$ are periodic. Given a partition $\bmu$ for which $\mu_i = \mu _j$, we have an automorphism which switches the two components. As in the anti-periodic case, this acts by $-1$ on $Z_1 (\cS_\bmu)$ so is killed in the space of invariants. On the other hand if $\bmu$ is a strict partition, then $Z_1(\cS_\bmu)$ is given by
\[
Z_1(\cS_{\mu_1}) \otimes \ldots \otimes Z_1(\cS_{\mu_{\ell(\bmu)}})
\]
which is a super vector space of degree $\ell(\bmu) \ \mod 2$. This completes the proof of Theorem \ref{TFT}.

\begin{remark}
The TQFT point of view allows us to generalize the notion of spin Hurwitz number to allow periodic boundary components. One has to be careful though, as the odd line $L=Z_1 (S^1_{per})$ is not trivialized; rather, it has a canonical element defined up to sign. This means that we should interpret such invariants as linear maps between linear combinations of copies of $L$ rather than numbers.
\end{remark} 
 
\subsection{The spin Hurwitz formulas}
Theorem \ref{TFT} states that the spin Hurwitz numbers $\cH_n(\Sigma, \bmu^1,\ldots, \bmu^k)$ are given by evaluating $Z_n(\Sigma^\ast)$ on the element $\delta_{\bmu^1} \otimes \ldots \otimes \delta_{\bmu^k} \in \cZ(\cY_n)$. On the other hand, Proposition \ref{inv} computes the invariant obtained by evaluating $Z(\Sigma^\ast)$ on the elements of the form $e_{\bnu^1} \otimes \ldots \otimes e_{\bnu^k}$ in terms of the numbers $t_\bnu = Z_n(\CAP)(e_\bnu)$. Note that 
\begin{equation}\label{equationt}
Z_n(\CAP)(\delta_\bmu) =
\begin{cases}
1/n! \quad \text{if $\bmu = 1^n$}\\
0 \quad \text{otherwise}
\end{cases}
\end{equation}
Thus, to prove Theorem \ref{Hurwitz} (the spin Hurwitz formulas), it remains to identify the change of basis matrix between the bases $\{\delta_\bmu \mid \bmu \in \OP(n)\}$ and $\{e_\bnu \mid  \bnu \in \SP(n)\}$.

Recall that $\cY_n$ is canonically identified with the twisted group algebra of the hyperoctahedral group $B_n$. Thus there is another basis $\{ \Delta_\bmu \mid \bmu \in \OP(n) \}$ given by the conjugacy classes in $B_n$ (see Section \ref{centralcharacters}). The following lemma is the final technical step needed to prove Theorems \ref{Theorem1} and \ref{Hurwitz}. Its proof will be left to Section \ref{proofofkeylemma}.
\begin{lemma}\label{keylemma}
\[
\delta_\bmu = 2^{(\ell(\bmu-n))/2} \Delta_\bmu.
\]
\end{lemma}

Assuming Lemma \ref{keylemma}, let us complete the proof of Theorem \ref{Hurwitz} (and thus also Theorem \ref{Theorem1} which is a special case). According to Proposition \ref{propositionsergeev2}, the change of basis matrix between $\{\Delta_\bmu \mid \bmu \in \OP(n)\}$ and $\{e_\bnu \mid \bnu \in \SP(n)\}$ is $f^\bnu _\bmu$ (indeed, the central characters of a group may be defined as the change of basis matrix between the orthogonal idempotents and the conjugacy classes).

\begin{lemma}
\[
t(\bnu) = \left(d(\bnu)/n!\right)^2
\]
\end{lemma}
\begin{proof}
The linear map $t: \cZ(\cY_n) \to \C$ takes the value $1/n!$ on $\delta_{1^n} = \Delta_{1^n}$ and zero on all other $\Delta_\bmu$. Thus to prove the lemma we should express the idempotents $e_\bnu$ in terms of the basis of conjugacy classes $\{\Delta_\bmu\}$. This is a straightforward exercise in the representation theory of finite groups, complicated only slightly by the fact that we are dealing with spin representations rather than linear representations (see Section \ref{centralcharacters} for details). The result follows by noting the interpretation of $d(\bnu)$ in terms of the dimension of the module $V^\bnu$ in Proposition \ref{propositionsergeev2}.
\end{proof}

The spin Hurwitz formulas of Theorem \ref{Hurwitz} now follow from Proposition \ref{inv}.

\subsection{Recursion formulas of Lee-Parker}\label{leeparker}
Let $\Sigma_g^{p}$ denote a closed spin surface of genus $g$ and parity $p\in \Z/2\Z$.  The following theorem was proved by Lee and Parker in \cite{LPP}. Let $C(\bmu)$ denote the centralizer of the conjugacy class $\bmu$ in $S_n$. Note that $\# C(\bmu) = \prod _i \mu_i (\mu_i !)$.
\begin{theorem}
Let $\bmu^1 , \ldots , \bmu^k \in \OP(n)$. 
\begin{enumerate}
\item If $g=g_1 + g_2$, and $p=p_1 +p_2$, then for $0 \leq k_0 \leq k$,
 \[
\cH(\Sigma_g , \bmu^1 , \ldots, \bmu^k) = \sum_{\bmu\in \OP(n)}  \# C(\bmu) \cH(\Sigma_{g_1}, \bmu^1, \bmu^{k_0}, \bmu)\cH(\Sigma_{g_2}, \bmu, \bmu^{k_0 +1} , \ldots, \bmu^k).
\]
\item If $g\geq 2$, or if $(g,p) = (1,0)$, then
\[
\cH_n(\Sigma^{p}_g, \bmu^1 , \ldots , \bmu^k) = \sum_{\bmu \in \OP(n)} \# C(\bmu) \cH_n(\Sigma^{p}_{g-1},\bmu, \bmu, \bmu^1 , \ldots , \bmu^k).
\]
\end{enumerate}
\end{theorem}
\begin{proof}
Considered as a linear map $\C \to \C$, the number $Z_n(\Sigma^p_g)$ factors as
\[
\C \xrightarrow{Z_n(\Sigma^{\ast p_1}_{g_1})} Z_n(S^1_{ap}) \xrightarrow{Z_n(^\ast\Sigma_{g_2}^{p_2})} \C,
\]
where $\Sigma^{\ast p_1}_{g_1}$ means $\Sigma^{p_1}_{g_1}$ punctured at one point, considered as a cobordism $\emptyset^1 \to S^1_{ap}$, and $^\ast \Sigma^{p_2}_{g_2}$ means $\Sigma_{g_2}^{p_2}$ punctured at one point, considered as a cobordism $S^1_{ap} \to \emptyset^1$. Let us expand the element 
\[
Z_n(\Sigma_{g_1}^{\ast p_1})(1) \in Z_n(S^1_{ap})
\]
in the $\delta_\bmu$ basis. As $\langle \Z(\Sigma_{g_1}^{\ast p_1})(1), \delta_\bmu \rangle = \cH(\Sigma_{g_1}^{p_1}, \bmu)$, the coefficient of $\delta_\bmu$ in this expansion is 
\[
\cH(\Sigma_{g_1}^{p_1}, \bmu)\langle \delta_\bmu , \delta_\bmu \rangle^{-1}.
\]
 The number $\langle \delta_\bmu , \delta_\bmu \rangle$ is a genus zero spin Hurwitz number with two ramification points; it is easy to compute that this is equal to $\#C(\bmu)^{-1}$. Thus
\begin{align*}
\cH_n(\Sigma) = Z_n(\Sigma_{g_2}^{\ast p_2}) \left(\sum_{\bmu \in \OP(n)} \cH_n(\Sigma_{g_1}^{p_1}, \bmu)\#C(\bmu) \delta_\bmu \right) \\
= \sum_{\bmu \in \OP(n)} \# C(\bmu) \cH_n(\Sigma_{g_1}^{p_1}, \bmu) \cH_n(\Sigma_{g_2}^{p_2}, \bmu).
\end{align*}
This proves part (1) in the case $k=0$. The ramified case is proved similarly. 
To prove part (2), note that we can factor $Z_n(\Sigma^p_g)$ as
\[
 \C \xrightarrow{Z_n(\Sigma_{g-1}^{\ast \ast p})} Z_n(S^1_{ap})\otimes Z_n(S^1_{ap}) \xrightarrow{\langle -,- \rangle} \C,
\]
where $\Sigma_{g-1}^{\ast \ast p}$ is $\Sigma_{g-1}^{p}$ with two punctures, considered as a cobordism $\emptyset^1 \to S^1_{ap} \sqcup S^1_{ap}$. Note that the inner product $\langle -,-\rangle$ the value of $Z_n$  on a 2-sphere with two punctures, considered as a cobordism $S^1 _{ap} \sqcup S^1_{ap} \to \emptyset^1$. The proof now proceeds in the same way as part (1).
\end{proof}

\subsection{Boundary conditions for extended TQFT}
To prove Lemma \ref{keylemma}, we need to compare the element $\delta_\bmu \in Z_n(S^1_{ap})$ with $\Delta_\bmu \in \cZ(\cY_n)$. To describe the embedding $Z_n(S^1_{ap})\hookrightarrow \cY_n$ explicitly, it is necessary to make use of a cobordism which is does not appear naturally in the category $\Bord^{Spin}$, namely, the whistle or closed-to-open string transition. We will briefly explain what this means below.

Let us first describe the category $\Bord^{Or,oc}$ of \emph{oriented} cobordisms with boundary condition (the theory of cobordism categories with boundary conditions is described in Section 4.3.22 of \cite{Lur}; we will need only a small part of the theory). In general, the 2-category $\Bord^{Or,oc}$ is defined in the same way as $\Bord^{Or}$, except that the word ``manifold'' is replaced by ``manifold with (marked) boundary''. Thus, $0$-objects of $\Bord^{Or,oc}$ are the same as in $\Bord^{Or}$, but there is an additional 1-object, given by the line interval $I=[0,1]$, where the boundary $\{0,1\}$ is considered ``marked'' (thus $I$ is a morphism $\emptyset^0 \to \emptyset^0$ rather than $pt_+ \to pt_+$, say).  If $M_0$ and $M_1$ are both 1-objects in $\Bord^{Or, oc}$ (thus, a disjoint union of copies of $S^1$ and $I$), then a $2$-morphism between $M_0$ and $M_1$ is a $2$-manifold with corners $\Sigma$, where $\partial \Sigma$ is equipped with a isomorphism
\[
\partial \Sigma \simeq \overline{M_0} \sqcup_{\partial_{marked} \overline{M_0}} \sqcup_{\partial_{marked} M_1} M_1. 
\]
\begin{remark}
The 1-category $\Omega \Bord^{Or,oc} = \Hom_{\Bord^{Or,oc}}(\emptyset^0,\emptyset^0)$ is equivalent to the category of \emph{open-closed cobordisms} as described e.g. in \cite{moore_d-branes_2006}.
\end{remark} 

Given an oriented 2d TQFT $Z: \Bord^{Or} \to \Alg$ with $Z(pt_+)=A$, an extension of $Z$ to a symmetric monoidal functor $\widetilde{Z}:\Bord^{Or,oc} \to \Alg$ is called a \emph{boundary condition} for $Z$. Boundary conditions for $Z$ are in one-to-one correspondence with $A$-modules; given an $A$-module $M$, there is an extension $\widetilde{Z}_M$ with $\widetilde{Z}_M(I) = \End(M)$. (More generally, we could consider categories of cobordisms where we allow marked boundary of various colours. An extension of $Z$ to such a category corresponds to a choice of $A$-module $M_i$ for each colour $i$. Such an extension assigns the vector space $\Hom_A(M_i,M_j)$ to the line interval in which one endpoint has colour $i$ and the other has colour $j$.)

Thus, any oriented TQFT $Z$ as above, has a canonical boundary condition $\widetilde{Z}$ given by the $A$-module $A$ itself. Thus $\widetilde{Z}(I) =A$. There are various $2$-morphisms given by marking intervals along the boundary of a disc. For example, marking 3 intervals along the boundary of a disc defines the \emph{pair of chaps} cobordism
\[
\CHAPS : I \sqcup I \to I.
\]
Applying $\widetilde{Z}$ to $\CHAPS$ recovers the multiplication map for the algebra $A$ (respectively, comultiplication if read in the other direction). Similarly, marking one interval on the boundary of the disc gives the trace map defining the Frobenius algebra structure on $A$ (or the unit map, if read in the other direction).

The main example that we will need is the \emph{whistle} cobordism (also known as the closed-to-open string transition),
\[
\WHISTLE : S^1 \to I
\]
The whistle has a cylinder $S^1 \times [0,1]$ as its underlying manifold, where the marked portion of the boundary is a line segment embedded in $S^1 \times \{1\}$. Applying $\widetilde{Z}$ to the whistle cobordism recovers the inclusion of the centre $\cZ(A) \hookrightarrow A$ (or the projection $A \to \Ab(A)$ if read in the other direction).

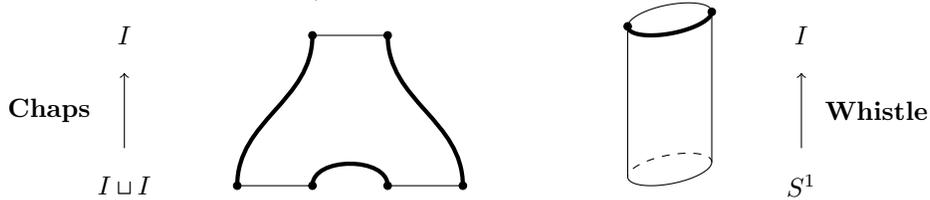
\begin{figure}[h]
\caption{The chaps and whistle cobordisms. The marked portion of the boundary is shown in bold.}
\begin{tikzpicture}

\draw (-.5,0) -- (.5,0);
\draw [fill] (-.5,0) circle [radius=0.05cm];
\draw [fill] (.5,0) circle [radius=0.05cm];
\draw [fill] (-1.5,-2) circle [radius=0.05cm];
\draw [fill] (1.5,-2) circle [radius=0.05cm];
\draw [fill] (-.5,-2) circle [radius=0.05cm];
\draw [fill] (.5,-2) circle [radius=0.05cm];

\draw (-1.5,-2) -- (-.5,-2);

\draw (.5,-2) -- (1.5,-2);

\draw [ultra thick] (-.5,0) to [out=270, in=90] (-1.5,-2);  

\draw [ultra thick] (.5,0) to [out=270, in=90] (1.5,-2); 

\draw [ultra thick] (-.5,-2) to [out=90, in=90] (.5,-2); 

\begin{scope}[xshift=-1cm]
\draw node at (-2,-2) {$I \sqcup I$};
\draw [->] (-2,-1.5) -- (-2,-.5);
\draw node at (-2,0) {$I$};
\draw node at (-3,-1) {$\CHAPS$};
\end{scope}

\begin{scope}[xshift=4cm, yshift=-2cm, scale=0.5]
\draw (1.618,.6266) -- (1.618,4.6266);

\draw (-0.618,.2394) -- (-.618,4.2394);

\draw [dashed, domain=20:200, smooth, variable=\t]
plot ({cos(\t) + 0.5*sin(\t)+.5},{0.433* sin(\t) +0.433});
\draw [domain=200:380, smooth, variable=\t]
plot ({cos(\t) + 0.5*sin(\t)+.5},{0.433* sin(\t) +0.433});

\draw [ domain=20:200, smooth, variable=\t]
plot ({cos(\t) + 0.5*sin(\t)+.5},{0.433* sin(\t) +4.433});
\draw [ultra thick, domain=200:380, smooth, variable=\t]
plot ({cos(\t) + 0.5*sin(\t)+.5},{0.433* sin(\t) +4.433});

\draw [fill] (1.618,4.6266) circle [radius=0.1cm];
\draw[fill] (-.618,4.2394) circle [radius=0.1cm];

\end{scope}

\begin{scope}[xshift=6cm]
\draw node at (0,-2) {$S^1$};
\draw [->] (0,-1.5) -- (0,-.5);
\draw node at (0,0) {$I$};
\draw node at (1.0,-1) {$\WHISTLE$};
\end{scope}

\end{tikzpicture}
\end{figure}

The theory of boundary conditions for TQFTs carries over to the spin setting: there is a 2-category $\Bord^{Spin, oc}$, with the property that extensions $\widetilde{Z}$ of a TQFT $Z:\Bord^{Spin} \to S\Alg$ correspond to supermodules for the superalgebra $A=Z(pt_+)$. Naturally, there are some new features in the spin case: there are now two $1$-objects $I$ and $\overline{I}$ whose underlying manifold is the interval $[0,1]$. The canonical boundary condition $\widetilde{Z}$ for $Z$ assigns $\widetilde{Z}(I)=A$ and $\widetilde{Z}(\overline{I})=A^!$, and the various spin structures on the pair of chaps cobordisms recover the multiplication on $A$, the $A$-module structure on $A^!$, and the map $A^! \otimes A^! \to A$ expressing the triviality of the square of the Serre automorphism. We will only require the use of a single cobordism in $\Bord^{Spin,oc}$, namely the whistle cobordism $S^1_{ap} \to I$. 

\subsection{The map $\widetilde{Z}_n(\WHISTLE)$}
Consider the parity theory $Z_1$ with its canonical boundary condition $\widetilde{Z}_1$ and let $A$ denote $\Cl_1 = Z_1(pt_+)$. The construction of the covering theories $Z_n$ as in Section \ref{average} can be extended to cobordisms with boundary conditions. For example, to compute $\widetilde{Z}_n(I)$, we look at the groupoid $\Cov_n(I)$. Note that the boundary of $I$ is marked, and thus we should consider $n$-fold covers of $I$ equipped with trivializations at $\partial I = \{0,1\}$. Thus $\Cov_n(I)$ is isomorphic to the discrete groupoid $S_n$, and $\widetilde{Z}_n(I) = \bigoplus_{S_n} A^{\otimes n}$. Applying $\widetilde{Z}_n$ to the pair of chaps cobordism identifies the algebra structure on $\bigoplus _{S_n} A^{\otimes n}$ as the semidirect product $A^{\otimes n} \rtimes S_n$.

Now let us apply the covering theory construction of Section \ref{average} to the whistle cobordism. Consider the following correspondence of groupoids:
\begin{equation}\label{equationcorrespondence}
\Cov_n(S^1_{ap}) \xleftarrow{} \Cov_n(\WHISTLE) \xrightarrow{} \Cov_n(I).
\end{equation}

Objects of the groupoid $\Cov_n(S^1_{ap})$ are given by covers of the circle $S^1_{ap}$. Equivalence classes of such covers are indexed by $\PP(n)$. For each $\bmu \in \PP(n)$, we have a cover $\cS_\bmu$ which is a disjoint union of circles $\cS_{\mu_i}$ each of which are connected covers of $S^1_{ap}$ of degree $\mu_i$. 

Objects of $\Cov_n(\WHISTLE)$ are given by covers of $(S^1_{ap}, p)$ where $p$ is a choice of basepoint on $S^1_{ap}$, together with a framing of the cover at $p$ (i.e. an identification between the fibre over $p$ and the set $\{1,\ldots, n\}$). Such covers are indexed by $S_n$. For each $\sigma \in S_n$, we have such a cover $\cS_\sigma$ which is a disjoint union of circles $\cS_{\sigma_i}$ where $\sigma = \sigma_1 \ldots \sigma_\ell$ is a disjoint cycle decomposition. 

Finally, objects of $\Cov_n(I)$ are covers of $(I=[0,1], \{0,1\})$ with framings at $\{0,1\}$. Such covers are indexed by $S_n$. For each $\sigma \in S_n$, we have $I_\sigma$ which is a disjoint union of $n$ line intervals.

Following the construction of Section \ref{average}, $\widetilde{Z_n}$ assigns to the whistle cobordism a linear map, obtained by averaging the theory $Z_1$ over the correspondence of groupoids \ref{equationcorrespondence}:
\begin{align}\label{equationlinearmap} \widetilde{Z}_n(S^1_{ap}) &= \bigoplus_{\bmu \in \PP(n)} Z_1(\cS_\bmu)^{C(\bmu)} \xrightarrow{\widetilde{Z}_n(\WHISTLE)} \bigoplus _{\sigma \in S_n} Z_1(I_\sigma) = Z_n(S^1_{ap}) 
\end{align}

\subsection{Proof of Lemma \ref{keylemma}}\label{proofofkeylemma}
Let us describe $\widetilde{Z}_n(\WHISTLE)$ more explicitly. To begin with we have the natural inclusion:
\[
\bigoplus_{\bmu \in \PP(n)} Z_1(\cS_{\bmu})^{C(\bmu)} \simeq \left( \bigoplus _{\sigma \in S_n} Z_1(\cS_\sigma)\right)^{S_n} \hookrightarrow \bigoplus _{\sigma \in S_n} Z_1 (\cS_\sigma).
\]
As explained in Section \ref{proofoftft}, the action of $C(\bmu)$ on the one dimensional super vector space $\Z_1(\cS_\bmu)$ is trivial if $\bmu \in \OP(n)$ and non-trivial otherwise. Thus we need only consider $\bmu \in \OP(n)$.

Fix an element $\sigma \in S_n$ of cycle type $\bmu \in \OP(n)$, with a corresponding disjoint cycle decomposition $\sigma = \sigma_1 \ldots \sigma_\ell$. We have $Z_1(\cS_\sigma) = Z_1(\cS_{\sigma_1}) \otimes \ldots \otimes Z_1 (\cS_{\sigma_\ell})$. Each $\cS_{\sigma_i}$ is a $\mu_i$-fold cover of the base $S^1_{ap}$, thus we compute $Z_1(\cS_{\mu_i})$ as a tensor product $\mu_i$ copies of $A^!$ over $A$, arranged in a circle (see Figure \ref{figurecircles}). We can write this circular tensor product linearly as: 
\begin{equation}\label{equationcircular}
Z_1(\cS_{\sigma_i}) = \underbrace{(A^! \otimes _A \ldots \otimes_A A^!)}_\text{$\mu_i$} \otimes_{A^e} A.
\end{equation}
To describe the map $Z_1(\cS_{\sigma_i}) \to Z_1(I_{\sigma_i})$, note that each factor $A^!$ canonically embeds in $A^e$. Replacing each instance of $A^!$ by $A^e$ in the tensor product \ref{equationcircular}, we arrive at $A^{\otimes \mu_i}$ which is precisely $Z_1(I_{\sigma_i})$.

\begin{figure}[h]
\caption{Computing $\widetilde{Z}_n(\WHISTLE)$.}\label{figurecircles}
\begin{tikzpicture}
\begin{scope}[scale= 0.7]
\draw node at (2,0) {$\otimes_A$};
\draw node at (1,1.732) {$\otimes_A$};
\draw node at (-2,0) {$\otimes_A$};
\draw node at (-1,1.732) {$\otimes_A$};
\draw node at (-1,-1.732) {$\otimes_A$};
\draw node at (1,-1.732) {$\otimes_A$};

\draw node at (0,2) {$\dotsb$};
\draw node at (1.732,1) {$A^!$};
\draw node at (0,-2) {$A^!$};
\draw node at (-1.732,1) {$A^!$};
\draw node at (1.732,-1) {$A^!$};
\draw node at (-1.732,-1) {$A^!$};
\end{scope}

\begin{scope}[scale= 0.7, xshift=7cm]
\draw node at (2,0) {$\otimes_A$};
\draw node at (1,1.732) {$\otimes_A$};
\draw node at (-2,0) {$\otimes_A$};
\draw node at (-1,1.732) {$\otimes_A$};
\draw node at (-1,-1.732) {$\otimes_A$};
\draw node at (1,-1.732) {$\otimes_A$};

\draw node at (0,2) {$\dotsb$};
\draw node at (1.732,1) {$A^e$};
\draw node at (0,-2) {$A^e$};
\draw node at (-1.732,1) {$A^e$};
\draw node at (1.732,-1) {$A^e$};
\draw node at (-1.732,-1) {$A^e$};
\end{scope}

\begin{scope}[xshift = 9cm]
\draw node at (0,0) {$A \otimes \ldots \otimes A$};
\end{scope}

\draw node at (2.5,0) {$\hookrightarrow$};

\draw node at (7.5,0) {$\simeq$};

\begin{scope}[scale=0.7, yshift=-6cm]
\draw ({2*cos(130)},{2*sin(130)}) arc [radius=2cm, start angle =130, end angle=410];
\draw [fill] ({2*cos(30)},{2*sin(30)}) circle [radius=0.1cm];
\draw [fill]  ({2*cos(150)},{2*sin(150)}) circle [radius=0.1cm];
\draw [fill]  ({2*cos(210)},{2*sin(210)}) circle [radius=0.1cm];
\draw [fill]   ({2*cos(270)},{2*sin(270)}) circle [radius=0.1cm];
\draw [fill]  ({2*cos(330)},{2*sin(330)}) circle [radius=0.1cm];

\draw node at (0,2) {$\ldots$};

\draw node at (0,0) {$S_{\sigma_i}$};

\draw node at (0,-3) {$\downarrow$};

\end{scope}

\begin{scope}[scale=0.7, xshift=7cm, yshift=-6cm]
\draw ({2*cos(160)}, {2*sin(160)}) arc [radius=2cm, start angle=160, end angle=200];
\draw [fill] ({2*cos(160)}, {2*sin(160)}) circle [radius=0.1cm];
\draw [fill] ({2*cos(200)}, {2*sin(200)}) circle [radius=0.1cm];

\draw ({2*cos(220)}, {2*sin(220)}) arc [radius=2cm, start angle=220, end angle=260];
\draw [fill] ({2*cos(220)}, {2*sin(220)}) circle [radius=0.1cm];
\draw [fill] ({2*cos(260)}, {2*sin(260)}) circle [radius=0.1cm];

\draw ({2*cos(280)}, {2*sin(280)}) arc [radius=2cm, start angle=280, end angle=320];
\draw [fill] ({2*cos(280)}, {2*sin(280)}) circle [radius=0.1cm];
\draw [fill] ({2*cos(320)}, {2*sin(320)}) circle [radius=0.1cm];

\draw ({2*cos(340)}, {2*sin(340)}) arc [radius=2cm, start angle=340, end angle=380];
\draw [fill] ({2*cos(340)}, {2*sin(340)}) circle [radius=0.1cm];
\draw [fill] ({2*cos(380)}, {2*sin(380)}) circle [radius=0.1cm];

\draw node at (0,2) {$\ldots$};

\draw node at (0,0) {$I_{\sigma_i}$};

\draw node at (0,-3) {$\downarrow$};

\end{scope}

\begin{scope}[scale=0.7, xshift=12cm, yshift=-6cm]
\draw (0,.7) -- (0,-.7);
\draw [fill] (0,.7) circle [radius=0.1];
\draw [fill] (0,-.7) circle [radius=0.1];

\draw node at (1,0) {$\ldots$};

\draw (2,.7) -- (2,-.7);
\draw [fill] (2,.7) circle [radius=0.1];
\draw [fill] (2,-.7) circle [radius=0.1];

\end{scope}

\begin{scope}[scale=0.7, yshift=-11cm, ]
\draw (0,0) circle [radius=1cm];
\draw [fill] (0,-1) circle [radius=0.1cm];
\end{scope}

\begin{scope}[scale=0.7, yshift =-11cm,  xshift=7cm]
\draw ({cos(-60)},{sin(-60)}) arc [radius=1cm, start angle= -60, end angle=240];

\draw [fill] ({cos(-60)},{sin(-60)}) circle [radius=0.1cm];
\draw [fill] ({cos(240)},{sin(240)}) circle [radius=0.1cm];
\end{scope}

\draw node at (2.5,-4.3) {$\leftarrow$};

\draw node at (7.5,-4.3) {$\simeq$};

\end{tikzpicture}
\end{figure}

On the other hand, $\cS_{\sigma_i}$ is isomorphic to $S^1_{ap}$, and thus $Z_1(\cS_\bmu) \simeq \C$. To go between the two descriptions of $Z_1(\cS_{\sigma_i})$, we can use the trivialization of the Serre automorphism $A^! \otimes _A A^! \simeq A$ to contract pairs of $A^!$'s in the tensor product, eventually arriving at $A^! \otimes _{A^e} A \simeq \C $. 

\begin{lemma}\label{subkeylemma}
Under the map $\C \simeq Z_1(S_{\sigma_i}) \to Z_1(I_{\sigma_i}) \simeq A^{\otimes \mu_i}$, the element $1$ maps to
\[
\sum 2^{-(\mu_i -1)/2} \eta ^{c_1} \otimes \ldots \otimes \eta^{c_{\mu_i}}
\]
where the sum is over $(c_1 , \ldots , c_{\mu_i}) \in (\Z/2\Z)^{\mu_i}$ such that $\sum_j c_j = 0 \in \Z/2\Z$.
\end{lemma}
\begin{proof}
Note that $A^!$ can be identified with the sub-bimodule of $A^e$ generated by the element $\varepsilon = 1\otimes 1 + \eta \otimes \eta$. Unwinding the duality data for $A^!$, we obtain that under the trivialization of the square of the Serre automorphism, the element $1\in A$ corresponds to the tensor $(1/2) \varepsilon \otimes \varepsilon$ in $A^! \otimes _A A^!$. Also, the element $1\in \C$ corresponds to $(1/2) \varepsilon \otimes 1$ in $A^! \otimes_{A^e} A$.

It follows that the element $1 \in \C$ maps to $(2^{-(\mu_i + 1)/2} \epsilon \otimes \ldots \otimes \epsilon \otimes$ in the tensor product \ref{equationcircular}.  Lemma \ref{subkeylemma} follows by taking canonical embedding $A^! \hookrightarrow A^e$ for each $A^!$ in the tensor product, and simplifying to $A^{\otimes \mu_i}$.
\end{proof}

Now let us complete the proof of Lemma \ref{keylemma}. Consider the tensor product over $i=1, \ldots, \ell$ of the maps $Z_1(\cS_{\sigma_i}) \to Z_1(I_{\sigma_i})$. By Lemma \ref{subkeylemma},the element $\delta_\bmu \in Z_1(\cS_\sigma)$ maps to 
\[
\bigotimes_{i=1} ^\ell \left(\sum 2^{-(\mu_i -1)/2} \eta ^{c_1} \otimes \ldots \otimes \eta^{c_{\mu_i}} \right) = 2^{(\ell -n)/2} \Delta_\sigma \in A^{\otimes n},
\]
where the sum is over $(c_1 , \ldots , c_{\mu_i}) \in (\Z/2\Z)^{\mu_i}$ such that $\sum_j c_j = 0 + 2\Z$. Here $\bigotimes_{i=1} ^\ell A^{\mu_i} = A^{\otimes n}$ is identified with the subspace $A^{\otimes n} \otimes \sigma$ of $\cY_n$, and $\Delta_\sigma$ is defined as in Proposition \ref{propositiondelta}. Thus $\widetilde{Z}_n(\WHISTLE)$ maps $\delta_\bmu$ to $2^{(\ell -n)/2} \Delta_\bmu$ as required.

\section{The Parity TQFT}\label{AtiyahTheory}
In this section, we explain how the parity of a closed spin surface $\Sigma$ can be upgraded to a TQFT
\[
Z_p:\Bord^{Spin} \to S\Alg,
\]
such that $Z_p(\Sigma) = (-1)^{p(\Sigma)}$. We will give a direct, but somewhat ad-hoc construction, then in Subsection \ref{homotopical} we explain how this theory can be constructed via homotopical methods using the spin orientation of $KO$.
\begin{remark}
The TQFT $Z_p$ is \emph{invertible} in the sense that for each object $N$, $Z_p(N)$ is (weakly) invertible for the monoidal structure, and for each $1$ or $2$-morphism $M$, $Z_p(M)$ is a (weakly) invertible $1$ or $2$-morphism. The parity TQFT and more general invertible TQFTs are discussed in \cite{Freed2}.
\end{remark}

\subsection{An aside on conformal spin structures and metrics}\label{Spin}
For the construction of the TQFT, it will be helpful to make use of metrics on the various vector bundles associated to the spin structure. Rather than making choices and then proving the invariance of those choices it will be more convenient to work with a particular model of the Spin bordism category for which canonical metrics are built in. The definitions and conventions in this subsection are drawn from \cite{ST}.

Let $V$ be a finite dimensional (real or complex) vector space with a quadratic form $q$. The Clifford algebra $\Cl(V)$ is the quotient of the tensor algebra on $V$ by the relation $v^2 = - q(v) .1$. We write $\Cl_d (\R)$ for the Clifford algebra of $\R ^d$ with its canonical positive definite quadratic form and $\Cl_{-d}( \R)$ for the Clifford algebra of $\R ^d$ with its negative definite form. We can extend the involution $v \mapsto -v$ on $V$ uniquely to an algebra involution $\alpha$ on $\Cl(V)$. This gives it the structure of a superalgebra (i.e. $\Z/2\Z$-graded algebra), and we can speak of a supermodules, etc. Note that the Clifford algebra $\Cl(V)$ inherits an inner product from that on $V$.

A \emph{spin structure} on $V$ is a choice of an irreducible $\Cl(V)-\Cl_d(\R)$ superbimodule $S_V$ with a compatible inner product (i.e. multipliction by elements of unit length in $V$ and $\R^n$ are isometries). Given two spin vector spaces $V$ and $W$, the space $Spin(V,W)$ of spin isometries consists of an isometry $f:V \to W$, together with a homomorphism of bimodules $\phi : f^\ast S_W \to S_V$ which is additionally an isometry. The map $(f,\phi) \mapsto f$ exhibits $Spin(V,W)$ as a double cover of $SO(V,W)$ (the connected component of $O(V,W)$ for which \emph{there exists} such a $\phi$). 

If $E \to X$ is a real vector bundle with a positive definite metric, let $\Cl(E) \to X$ denote the bundle of algebras whose fibre over $x \in X$ is the Clifford algebra $\Cl(E_x)$. A spin structure on $E$ is a bundle $S_E$ of irreducible $\Cl(E)-\Cl_d(\R)$ superbimodules (i.e. a compatible family of spin structure on the quadratic vector spaces $E_x$). 

Given a smooth manifold $M$ of dimension $d$, we write $L^k$ for the (trivializable) line bundle of $k$-densities on $M$ (in our convention a $d$-density can be integrated over $M$ to get a real number). The \emph{weightless cotangent bundle} $T^\ast_0 M$ is defined to be $L^{-1} \otimes T^\ast M$. A \emph{conformal spin structure} on a conformal manifold $M$ is defined to be a spin structure on $T^\ast_0 M$.

Note that a choice of conformal structure equips the weightless cotangent bundle with a \emph{canonical} metric, and thus the Clifford bundles, spinor bundles, etc. all come equipped with canonical metrics. Moreover, if $M$ has boundary $\partial M$, then $T^\ast _0M |_{\partial M}$ is canonically equivalent to $T^\ast_0 \partial M \oplus \underline{\R}$, thus a conformal spin structure on $M$ canonically defines a conformal spin structure on the boundary. 

We can define a $2$-category of (topological) conformal spin cobordisms $c\Bord^{Spin}$ in a similar way to Subsection \ref{extended}. Note that $0$ and $1$-dimensional manifolds carry a unique conformal structure. Moreover, we will consider two conformal spin $2$-cobordisms to give rise to the same $2$-morphism if there is a spin diffeomorphism between them, fixing the given isomorphisms on the boundary (such a diffeomorphism is not required to preserve the conformal structure). 

One can check that the symmetric monoidal $2$-category $c\Bord^{Spin}$ is equivalent to $\Bord^{Spin}$ as defined, for example in \cite{SP}. We will not attempt to give a rigorous proof of this statement\footnote{Indeed to give such a rigorous proof, we would first need to give a more rigorous definition of the bordism bicategory, taking care of gluing using collars e.g. as in \cite{SP}.}, but intuitively this should be clear: a choice of Riemannian metric on a conformal spin manifold trivializes the density bundle and defines a spin structure in the usual sense. Moreover, the space of such Riemannian metrics is contractible.

\subsection{The direct construction}
The construction of the $0,1$-dimensional part of the theory is taken from \cite{ST}, Section 2.3. 
Given a (conformal) spin $0$-manifold $N$, $\cS(N)$ is a finite dimensional super vector space equipped with an inner product. We define a symmetric bilinear form $b_N$ by $b_N(v,w) = \langle v, i(w) \rangle$, where $i$ is the grading involution and $\langle -,-\rangle$ is the inner product. We set
\[
Z_p(N) = \Cl(\cS(N), b_N).
\]

Given a spin 1-cobordism $M$ with $\partial N = \overline{N}_0 \sqcup N_1$, the even spinor bundle $S_M$ on $M$ is a metric line bundle and thus is equipped with a canonical flat connection. The space of flat sections $\cS^h(M)$ of $S_M$ over $M$ is called the space of harmonic spinors. The space $\cS_0^h(M)$ of even harmonic spinors is finite dimensional, and comes equipped with a map 
\[
L:\cS^h_0(M) \to \cS(\partial N),
\]
given by restriction to the boundary. The morphism $L$ is a \emph{generalized Lagrangian} in the sense of \cite{ST}, Definition 2.2.9. As explained in \cite{ST}, Definition 2.2.4, the \emph{Fock module} of $L$ is defined to be
\[
F(L) := \ker(L)^\ast \otimes \Lambda^\ast (Im(L)),
\]
which carries a canonical action of $\Cl(\cS(\partial N))$, or equivalently, is a $\Cl(\cS(N_0))-\Cl(\cS(N_1))$-bimodule. We set $Z_p(M) = F(L)$. Note that if $M$ is closed, then $F(L) = \bigwedge^{top}(\cS^h(M))$ is the \emph{Pfaffian line} of the Dirac operator on $T^\ast _0 M$. For example, if $M \simeq S^1_{ap}$, there are no harmonic spinors and thus $Z_p(S^1_{ap})$ is canonically isometric to $\R$. On the other hand, if $M \simeq S^1_{per}$ the space of harmonic spinors is one dimensional and thus $Z_p(M)$ is a one dimensional odd metric line. 

\begin{lemma}[\cite{ST}, Gluing Lemma 2.2.8]\footnote{Note that in this case, all Hilbert spaces and Clifford algebras are finite dimensional.}
Given conformal spin $1$-cobordisms $N_0 \xrightarrow{M} N_1$ and $N_1 \xrightarrow{M'} N_2$ there is a canonical isomorphism $Z_p(M \sqcup_{N_1} M') \simeq Z_p(M) \otimes_{Z_p(N_1)} Z_p(M')$.
\end{lemma}

It remains to explain what $Z_p$ assigns to a $2$-morphism in $\Bord^{Spin}$. Let us first consider the case of a spin surface $\Sigma$ with boundary $M:= \partial \Sigma$. We would like to define an element $Z_p(\Sigma) \in Z_p(M)$. There are various approaches to this; for example one could use a refinement of the mod 2 index of the Dirac operator to a spin conformal $2$-manifold with boundary which takes values in the Pfaffian line of the boundary. However, we will just pursue a more down to earth method in terms of the parity of closed spin manifolds obtained by gluing in discs and cylinders to $\Sigma$.

Let $D$ be the unit disc in $\R^2$ equipped with its canonical conformal spin structure, and $C = S^1_{per} \times [0,1]$ similarly equipped. 
\begin{definition}
Suppose that $M$ is a closed conformal spin $1$-manifold with $n_D$ antiperiodic components and $2n_C$ antiperiodic components (where $n_D, n_C \in \Z_{\geq 0}$). A \emph{capping} of $M$ is defined to be a spin isomorphism 
\[
c: M \simeq \partial (\bigsqcup_{n=1}^{n_D} D \sqcup \bigsqcup_{n=1}^{n_C} C).
\]
The set of cappings of $M$ is denoted $Cappings(M)$.
\end{definition}
Note that a capping of $M$ only exists if $M$ has an even number of periodic components. This condition is satisfied if $M$ is the boundary of a spin surface.

\begin{lemma}\label{lemmacapping}
Each capping of $M$ canonically determines an element of $Z_p(M)$ of unit length.
\end{lemma}
\begin{proof}
First consider the case where $M$ has only anti-periodic components; the space of harmonic spinors is $0$, and thus its determinant line $Z_p(M)$ is canonically trivial. In particular $Z_p(M)$ carries the canonical element $1$. Now let us consider the case when $M$ has precisely two periodic components $M_0$ and $M_1$. A choice of capping allows us to define a parallel transport isometry $\phi: \cS^h(\overline{M}_0)  \simeq \cS^h(M_1)$ which in turn defines an element $\det(\phi) \in Z_p(M)$. The general case follows.
\end{proof}

Given a spin surface $\Sigma$ with boundary $\partial \Sigma = M$, a capping  $c$ of $M$ canonically determines a closed spin surface $\Sigma_c$. by gluing along the map $c$. 
\begin{lemma}
The assignment 
\begin{align*}
Cappings(M) &\to \{\pm 1 \} \subseteq \R \\
c &\mapsto (-1)^{p(\Sigma_c)} .
\end{align*}
factors through the map $Cappings(M) \to Z_p(M)$ given by Lemma \ref{lemmacapping}, defining an isometry of graded metric lines
\[
Z_p(\Sigma): Z_p(M) \to \R.
\]
\end{lemma}
\begin{proof}
We prove this using the definition of parity via the Arf invariant of the quadratic form $\varphi: H_1(\Sigma_c ; \F_2) \to \F_2$ associated to the spin structure. Recall that if $M$ is a closed loop in $\Sigma$, then $\varphi([M]) = 0$ (respectively, $1$) if $M$ has a antiperiodic (respectively, periodic) spin structure. 

Changing the choice of capping on antiperiodic components does not affect the corresponding quadratic form, as one can choose representatives of cycles in $H_1(\Sigma_c ; \F_2)$ which do not intersect the antiperiodic components. Thus we may assume $M$ has only periodic components. Consider a particular component $M_0$ of $M$; recall that there are two choices of spin isomorphism $S^1_{per} \simeq M_0$ (up to spin isotopy). Given a capping $c$, we can modify $c$ by changing the choice of $S^1_{per} \simeq M_0$ to obtain a new capping $c'$. The effect of changing $c$ to $c'$ has presicely the effect of changing the value of the quadratic form on the Poincar\'e dual cycle to $M_0$, which in turn changes the value of the Arf invariant; simultaniously, the corresponding value of $Z_p(M)$ gets multiplied by $-1$. The result follows readily from this observation.
\end{proof}

Finally, given an arbitrary $2$-morphism $\Sigma: M_0 \to M_1$, where $M_0,M_1: N_0 \to N_1$ are $1$-cobordisms, we can now define a map
\[
Z_p(\Sigma): Z_p(M_0) \to Z_p(M_1),
\]
of $Z_p(N_0)-Z_p(N_1)$-bimodules as follows. Note that, topologically, the manifold with corners underlying $\Sigma$ may be thought of as a spin manifold $\Sigma'$ with closed boundary $M'$ (i.e. we can smooth the corners of $\Sigma$). To identify the spin structure on the boundary $M'$, let us consider the underlying spin $1$-manifold with boundary of $M_0$, as a cobordism
\[
M_0' :N_0 \sqcup \overline{N_1} \to \emptyset^0.
\]
Taking the opposite spin structure, we have 
\[
\overline{M}_0 ': \overline{N_0} \sqcup N_1 \to \emptyset^0.
\]
Similarly, we can consider
\[
M_1': \emptyset^0 \to \overline{N_0} \sqcup N_1.
\]
Thus, we can compose $\overline{M}_0'$ and $M_1 '$ in the bordism category to obtain the closed spin $1$-manifold $M'$. The general principles of preservations of duality under symmetric monoidal functors tell us that:
\[
Z_p(M') \simeq \Hom_{Z_p(N_0)^{op} \otimes Z_p(N_1)} (Z_p(\overline{M}_0 '), Z_p(M_1 ')).
\]
We define $Z_p(\Sigma) \in \Hom_{Z(N_0)^{op} \otimes Z(N_1)}(Z(M_0), Z(M_1))$ to correspond to $Z_p(\Sigma') \in \Z_p(M')$ under the above isomorphism. 

It remains to check that $Z_p$ as defined above, defines a TQFT, i.e. a symmetric monoidal functor
\[
Z_p: \Bord^{Spin} \to S\Alg_{\R, m}^\times,
\]
where $S\Alg_{\R,m}^\times$ is the $2$-category whose objects are Clifford algebras of quadratic vector spaces, morphisms are Fock modules associated to Lagrangian subspaces, and $2$-morphisms are isomorphisms of bimodules which are also isometries for the given inner products (see \cite{ST}, Definition 2.2.4). (We obtain a TQFT with target $S\Alg$ by just applying $\otimes_ \R \C$ and forgetting the inner products.) This amounts to checking the compatibility of the above definition under various kinds of gluing. We will not spell out all the necessary compatibilities here; rather, we will prove the following gluing law from which all necessary compatibilities may be deduced.
\begin{proposition}\label{gluing}
Let $\Sigma$ be a closed spin surface, and $M$ an embedded disjoint union of circles in $\Sigma$ with an even number of periodic components. Define $\Sigma_{cut, M}$ by cutting along $M$ to obtain a surface with boundary $M \sqcup \overline{M}$. Choose a capping $c$ of $M$, and let $\overline{c}$ denote the corresponding capping of $\overline{M}$. We define the closed surface $\Sigma_{c \sqcup \overline{c}, M}$ by capping the boundary of $\Sigma_{cut, M}$ according to $c \sqcup \overline{c}$. Then,
\[
p(\Sigma) = p(\Sigma_{c \sqcup \overline{c}, M}).
\]
\end{proposition}
\begin{proof}
We use the definition of parity via the Arf invariant of the quadratic form on $H_1$. The proof is an elementary check based on the properties of embedded spin circles under cutting and gluing. We will give the argument in the case where $M$ has two periodic components (the case when $M$ has a single antiperiodic component is proved similarly, and all other cases can be proved by indection from these). For simplicity, we assume further that $\Sigma$ is connected.

Note that the genus of $\Sigma_{c \sqcup \overline{c}, M}$ is always one greater than the genus of $\Sigma$. One can choose a symplectic basis for $H_1(\Sigma ; \F_2)$ which contains the class $\kappa$ of either component of $M$ (the two components necessarily define the same class in $H_1(\Sigma, \F_2)$) and its Poncar\'e dual $\kappa^\vee$. The $\F_2$-vector space $H_1(\Sigma_{c \sqcup \overline{c}} ; \F_2)$ has a similar basis but with two copies of $\kappa$, $\kappa_1$ and $\kappa_2$ and their respective Poncar\'e duals. Here, there are two possible cases: either $\kappa^\vee$ is periodic or anti-periodic. In the first case, precisely one of $\kappa_1^\vee$ or $\kappa_2^\vee$ will be periodic (which one depends on the choice of $c$). In the second case, either both $\kappa_1^\vee$ and $\kappa_2^\vee$ will be periodic, or both anti-periodic. Thus in all cases, the Arf invariant is unchanged as required. 
\end{proof}
\begin{remark}
One could also prove proposition \ref{gluing} by noting that the surfaces $\Sigma$ and $\Sigma_{c \sqcup \overline{c}, M}$ are spin cobordant, and the parity is a spin cobordism invariant \cite{At}.
\end{remark}

\subsection{Homotopical construction}\label{homotopical}
One way of thinking of the parity of a closed spin surface is in terms of the Gysin map in $KO$-theory afforded by the Spin structure. Namely, we have a map
\[
\int_\Sigma :\KO(\Sigma) \to \KO^{-2}(pt) \simeq \Z/2\Z,
\]
and $\int_\Sigma 1 = p(\Sigma)$. 

Similarly, one has invariants $\int_M 1 \in \KO^{-1}(pt) \simeq \Z/2\Z$, and $\int_N 1 \in \KO(pt) = \Z$ given a spin $1$-manifold $M$ and a spin $0$-manifold. There is a very general procedure by which one can upgrade the assignment $\Sigma \mapsto \int_\Sigma 1$ to a TQFT. 

First note that the Spin orientation of $KO$ may be thought of as a map of $E_\infty$-ring spectra 
\[
ABS: MSpin \to ko.
\]
Here, $MSpin$ is the spectrum representing spin cobordism, and $ko$ represents connective $KO$. Thus a closed spin $d$-manifold $M$ represents a class $[M] \in \Omega_d^{Spin} = \pi_d(MSpin)$ and the element $\int_M 1$ is equal to the image of $[M]$ in $\pi_d(KO) = KO^{-d}(pt)$ under the map $ABS$. Given a connective spectrum $E$ (or equivalently and infinite loop space), its fundamental $2$-groupoid $\pi_{\leq 2}(E)$ is naturally symmetric monoidal. The morphism ABS induces a functor of symmetric monoidal $2$-groupoids on the level of fundamental $2$-groupoids.
\[
\pi_{\leq 2} (MSpin) \to \pi_{\leq 2}(ko).
\]
It remains to describe the $2$-groupoids $\pi_{\leq 2} (MSpin)$  and $\pi_{\leq 2}(ko)$ in terms of bordism categories and superalgebras respectively. 
 
Let us define a 2-category $\Bord_\infty^{spin}$ as follows: the objects and morphisms of $\Bord_\infty^{spin}$ are the same as those of $\Bord^{Spin}$, but the 2-morphisms are given by 2-cobordisms \emph{up to cobordism} (recall that in the 2-category $\Bord^{Spin}$, the 2-morphisms are 2-cobordisms up to diffeomorphism). Note that there is a canonical quotient functor $\Bord^{Spin} \to \Bord_\infty ^{Spin}$. The following proposition is  surely known, but I have been unable to track down a precise reference; it should follow from an appropriate version of the results of \cite{GMTW}.
\begin{proposition}
The fundamental 2-groupoid of $MSpin$ is equivalent as symmetric monoidal 2-categories to $\Bord_{\infty}^{Spin}$.
\end{proposition}

On the other hand, we have the groupoid $S\Alg_{\R, m}^\times$ of \emph{invertible} real metric superalgebras, invertible metric superbimodules, and invertible isometries of bimodules. Again, the following result is surely known; we give a proof below.
\begin{lemma}\label{lemmacliff}
There is a canonical symmetric monoidal functor
\[
Cliff:\pi_{\leq 2} ko \to S\Alg_{\R, m}^\times.
\]
\end{lemma}
\begin{remark}
The functor $Cliff$ is not quite an equivalence: it induces an isomorphism on $\pi_1$ and $\pi_2$, but on $\pi_0$ it induces the quotient map $\Z \to \Z/8\Z$.
\end{remark}

Finally, the TQFT $Z_p$ is given by the composite:
\[
\Bord^{Spin} \to \Bord_{\infty}^{Spin}\simeq \pi_{\leq 2} MSpin \xrightarrow{\pi_{\leq 2} ABS} \pi_{\leq 2} (ko) \xrightarrow{Cliff} S\Alg_{\R, m}.
\]
\begin{proof}[Proof of Lemma \ref{lemmacliff}]
Let $\Vect _{\R,m}$ be the \emph{topological} category of real vector spaces equipped with an inner product, with isometries as morphisms. Recall that the space $ko$ is obtained by taking the group completion of this category. We will define a symmetric monoidal functor from $\Vect _{\R,m} $ (with symmetric monoidal structure given by orthogonal direct sum) to the 2-category $S\Alg_{\R, m}$, which by the universal property of group completion must factor through $ko$. To define such a functor, we must assign a superalgebra to each quadratic vector space $V$, a bimodule to each isometry $f: V\to W$, and a map of bimodules to each homotopy class of paths of isometries $f_t : V\to W$.

The map $Cliff$ is defined by taking a vector space $V$ as above to its Clifford algebra $\Cl(V)$. An isometry $f: V \to W$ naturally defines an invertible $\Cl(V)-\Cl(W)$ bimodule $M_f$. Recall that a \emph{spin structure} on $W$ means a choice of an irreducible $\Cl (W) - \Cl_d(\R)$ bimodule. Note that for any choice of spin structure $S_W$ on $W$, $M_f$ can be identified with $f^\ast S_W \otimes _{\Cl _d(\R)} S_W ^\ast$, where $S_W ^\ast = \Hom _\R (S_W , \R)$ is the $\Cl _d(\R)- \Cl(W)$-bimodule dual to $S_W$. Moreover, picking a compatible spin structure $S_V$ on $V$ and a lift of $f$ to a spin isometry $(f,\phi)$, induces an isomorphism of bimodules $\widehat{\phi}:M_f \to S_V \otimes _{\Cl_d(\R)} S_W$. 

A path in $O(V,W)$ is given by a family $f_t$ of isometries where $t\in [0,1]$. As the map $Spin(V,W) \to SO(V,W)$ is a covering map, if we pick a lift $\phi _0$ of $f_0$ as above, then we obtain a lift $\phi _t$ of $f_t$ (depending only on the homotopy class of the path). This gives the required isomorphism of bimodules
\[
\widehat \phi _1 ^{-1} \circ \widehat{\phi _0} : M_{f_0} \to M_{f_1}
\]
which doesn't depend on the choices of lift above.
\end{proof}

\newcommand{\etalchar}[1]{$^{#1}$}
\providecommand{\bysame}{\leavevmode\hbox to3em{\hrulefill}\thinspace}
\providecommand{\MR}{\relax\ifhmode\unskip\space\fi MR }
\providecommand{\MRhref}[2]{%
  \href{http://www.ams.org/mathscinet-getitem?mr=#1}{#2}
}
\providecommand{\href}[2]{#2}

\end{document}